\numberwithin{equation}{section}
\newtheorem{theorem}{Theorem}[section]
\newtheorem{definition}{Definition}[section]
\newtheorem{lemma}[theorem]{Lemma}
\newtheorem{corollary}[theorem]{Corollary}
\begin{document}
\begin{center}
{\Large{\textbf{{COLORING SUMS OF EXTENSIONS OF CERTAIN GRAPHS}}}} 
\end{center}
\vspace{0.5cm}
\centerline{\large{Johan Kok}} 
\centerline{\small{Tshwane Metropolitan Police Department}}
\centerline{\small{City of Tshwane, Republic of South Africa}}
\centerline{\tt {kokkiek2@tshwane.gov.za}}
\vspace{0.5cm}
\centerline{\large{Saptarshi Bej}} 
\centerline{\small{Department of Mathematics and Statistics}}
\centerline{\small{Indian Institute of Science Education}}
\centerline{\small{Kolkata, Mohanpur-741246, India}}
\centerline{\tt {saptarshibej24@gmail.com}}
\vspace{0.5cm}
\begin{abstract}
\noindent We recall that the minimum number of colors that allow a proper coloring of graph $G$ is called the chromatic number of $G$ and denoted $\chi(G)$. Motivated by the introduction of the concept of the $b$-chromatic sum of a graph by Lisna and Sunitha, the concept of $\chi'$-chromatic sum and $\chi^+$-chromatic sum are introduced in this paper. The extended graph $G^x$ of a graph $G$ was recently introduced for certain regular graphs. This paper furthers the concepts of $\chi'$-chromatic sum and $\chi^+$-chromatic sum to extended paths and cycles. Bipartite graphs also receive some attention. The paper concludes with \emph{patterned structured} graphs. These last said graphs are typically found in chemical and biological structures.
\end{abstract}
\noindent {\footnotesize \textbf{Keywords:} Chromatic number, $\chi'$-Chromatic sum, $\chi^+$-Chromatic sum,  extended path, extended cycle}\\ \\
\noindent {\footnotesize \textbf{AMS Classification Numbers:} 05C05, 05C20, 05C38, 05C62} 
\section{Introduction}
For general notation and concepts in graph and digraph theory, we refer to [2, 4, 5]. Unless mentioned otherwise, all graphs mentioned in this paper are simple, connected, finite and undirected graphs of order $n \geq 2$. We recall that the minimum number of colors that allow a proper coloring of graph $G$ is called the chromatic number of $G$ and denoted $\chi(G)$. Since coloring may be effected from any alphabet like in an application of cryptographical coding of vertices, we use a preferred definition slightly different from the contemporary definition found in the literature. Consider a \emph{proper k-coloring} of a graph $G$ and denote the set of $k$ colors, $\mathcal{C} = \{c_1,c_2, c_3, \dots , c_k\}.$ Also consider the disjoint subsets of $V(G)$ i.e. $V_{c_i} = \{v_j: v_j \rightarrow c_i, v_j \in V(G), c_i \in \mathcal{C}\},$ $1 \leq i \leq k.$ Clearly, $V(G) = \bigcup\limits_{i=1}^{k}V_{c_i}.$ If for largest $k \in \Bbb N$ a proper k-coloring is found such that there exist $v_s \in V_{c_i}$ and $v_t \in V_{c_j},$ such that edge $v_sv_t \in E(G)$ for all distinct pairs of colors $c_i, c_j$ then the $b$-chromatic number of $G$ is defined to be $\varphi(G) = k.$ Such a coloring is called a $b$-coloring of $G$.\\\\
In a paper by Anirban Banerjee and Saptarshi Bej [1] the concept of extending regular graphs has been introduced. We shall use the idea of extending a graph (also called an extended graph) in general and study how certain interesting invariants change in the extended graph.
\section{On Extensions of Graphs}
We formally define the notion of a \emph{degree-extension} of a graph $G$.
\begin{definition}
(i) Let $G$ be a graph of even order $n\geq 2$. If the complement graph $G^c$ has a perfect matching say $M,$ then $G^x_c = G+M$ is called a complete degree-extension of $G$, also called an $c$-extended $G$.\\
(ii)  Let $G$ be a graph of odd order $n \geq 3$ and let $u \in V(G)$ such that $d_G(u) = \Delta(G)$.  If the complement graph $(G-u)^c$ has a perfect matching say $M$, then $G^x_{ic} = G+ M$ is called an incomplete degree-extension of $G$, also called an $ic$-extended $G$.
\end{definition}
Note that the choice of vertex $u$, $d_G(u) = \Delta(G)$ in Definition 2.1(ii) is by default. Other criteria can apply of which $u$, $d_G(u) = \delta(G)$ is an obvious other criteria. An immediate and important result following from Definition 2.1 is given by the next corollary.
\begin{corollary}
From Definition 2.1 it follows that:\\
(i) In $G^x_c$, $d_{G^x_c}(v) = d_G(v) + 1$, $\forall v \in V(G).$\\
(ii) In $G^x_{ic}$, $d_{G^x_{ic}}(v) = d_G(v) + 1$, $\forall v \in V(G),$ $v\neq u$ and $\Delta(G^x_{ic}) \in \{\Delta(G), \Delta(G) + 1\}.$
\end{corollary}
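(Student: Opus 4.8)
The plan is to observe that both parts are direct consequences of how a perfect matching interacts with an edge set: a perfect matching covers each vertex in its support exactly once, so adjoining it raises the degree of each such vertex by precisely one, \emph{provided} the matching edges are genuinely new, i.e.\ not already present in $G$. Since in both cases the matching $M$ is drawn from the complement of (a subgraph of) $G$, every edge of $M$ is by construction a non-edge of $G$; this is the fact that makes the degree count exact rather than merely an upper bound, and it is the crux of the whole argument.

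For part (i), I would argue as follows. By Definition 2.1(i), $M$ is a perfect matching of $G^c$, and $V(G^c)=V(G)$, so every $v\in V(G)$ lies on exactly one edge of $M$. Because $M\subseteq E(G^c)$, each such edge is a non-edge of $G$ and hence does not duplicate an existing adjacency. Passing from $G$ to $G^x_c=G+M$ therefore contributes exactly one new edge at every vertex, giving $d_{G^x_c}(v)=d_G(v)+1$ for all $v\in V(G)$.

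For part (ii) the argument is the same, but restricted to $V(G)\setminus\{u\}$. Here $M$ is a perfect matching of $(G-u)^c$, so its support is exactly $V(G-u)=V(G)\setminus\{u\}$ and it leaves $u$ uncovered. The one point needing care is that the edges of $M$, a priori only known to be non-edges of $G-u$, are also non-edges of $G$: this holds because any edge $xy\in M$ has $x,y\neq u$, so $xy\in E(G)$ would force $xy\in E(G-u)$, contradicting $xy\in E((G-u)^c)$. Consequently each $v\neq u$ gains exactly one incident edge, yielding $d_{G^x_{ic}}(v)=d_G(v)+1$, while $u$ receives no new edge, so $d_{G^x_{ic}}(u)=d_G(u)=\Delta(G)$.

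Finally, the bound on $\Delta(G^x_{ic})$ follows by comparing these degrees. Every $v\neq u$ satisfies $d_{G^x_{ic}}(v)=d_G(v)+1\leq \Delta(G)+1$, and $d_{G^x_{ic}}(u)=\Delta(G)$, so immediately $\Delta(G)\leq\Delta(G^x_{ic})\leq \Delta(G)+1$. The only genuine case distinction — and the closest thing here to an obstacle — is whether the maximum degree of $G$ is attained by a vertex other than $u$: if some $v\neq u$ has $d_G(v)=\Delta(G)$, then $d_{G^x_{ic}}(v)=\Delta(G)+1$ and the extension reaches $\Delta(G)+1$; if instead $u$ is the unique vertex of maximum degree, then every other degree rises to at most $\Delta(G)$ and the maximum stays at $\Delta(G)$. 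Either way $\Delta(G^x_{ic})\in\{\Delta(G),\Delta(G)+1\}$, which completes the argument.
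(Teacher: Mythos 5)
Your proposal is correct and takes exactly the route the paper intends: the paper states this corollary without any proof, treating it as immediate from Definition 2.1, and your argument is precisely the direct verification being implicitly invoked. The details you make explicit --- that edges of $M$ are non-edges of $G$ (so degrees rise by exactly one rather than at most one), that $M$ leaves $u$ uncovered so $d_{G^x_{ic}}(u)=\Delta(G)$, and the case distinction on whether $\Delta(G)$ is attained at some vertex other than $u$ --- are just the steps the paper leaves unstated.
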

In [1] it is shown that a $r$-regular graph $G$ of even order $n$ with $r < \frac{n}{2}$ always has a degree-extension $G^x_c$.  It means that $G$ can always be extended to a $(r+1)$-regular graph $G^x_c$ by adding edges to $G$. First, we generalise the result to any simple connected graph $G$ for which $\Delta(G) < \frac{n}{2}.$ 
\begin{theorem}
Let $G$ be a graph of order $n$ with $\Delta(G) < \frac{n}{2},$ then graph $G$ has a complete degree-extension $G^x_c$.
\end{theorem}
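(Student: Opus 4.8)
The plan is to reduce the existence of a complete degree-extension to a statement about the complement graph $G^c$, and then to invoke a classical Hamiltonicity result. By Definition 2.1(i), $G^x_c = G + M$ exists precisely when $G^c$ admits a perfect matching $M$; in particular this forces $n$ to be even, so I assume $n$ is even throughout (for odd $n$ one would instead work with the incomplete extension of Definition 2.1(ii)). Thus the whole theorem reduces to proving that $G^c$ has a perfect matching.

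First I would bound the minimum degree of $G^c$ from below. For every $v \in V(G)$ one has $d_{G^c}(v) = (n-1) - d_G(v)$. Since $\Delta(G) < \frac{n}{2}$ and $n$ is even, the integrality of the degrees yields $d_G(v) \leq \frac{n}{2} - 1$ for all $v$, whence $d_{G^c}(v) \geq (n-1) - \left(\frac{n}{2} - 1\right) = \frac{n}{2}$. Therefore $\delta(G^c) \geq \frac{n}{2}$.

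Next I would apply Dirac's theorem: a simple graph on $n \geq 3$ vertices with minimum degree at least $\frac{n}{2}$ is Hamiltonian. Hence $G^c$ contains a Hamiltonian cycle $C$. Because $n$ is even, the edges of $C$ partition into two disjoint perfect matchings; selecting either one gives a perfect matching $M$ of $G^c$. By Definition 2.1(i), $G + M = G^x_c$ is then a complete degree-extension of $G$, as required.

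The only remaining case is the base case $n = 2$, where Dirac's theorem does not apply. Here $\Delta(G) < 1$ forces $G$ to be the edgeless graph on two vertices, so $G^c = K_2$ is itself a perfect matching and the claim holds trivially. The one point that requires care is the degree computation in the second step: the strict inequality $\Delta(G) < \frac{n}{2}$ together with the parity of $n$ must be combined correctly so as to land exactly on the Dirac threshold $\delta(G^c) \geq \frac{n}{2}$. This threshold is sharp, so there is no slack to spare and the argument genuinely requires $n$ to be even; I expect this degree bookkeeping, rather than the appeal to Dirac's theorem, to be the only real obstacle.
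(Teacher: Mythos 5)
Your proof is correct and takes essentially the same route as the paper: bound $\delta(G^c) \geq \frac{n}{2}$ from $\Delta(G) < \frac{n}{2}$, invoke Dirac's theorem to obtain a Hamilton cycle in $G^c$, and extract one of its two perfect matchings to form $G^x_c = G + M$. The only divergence is that the paper's Case (ii) additionally treats odd $n$ by deleting a vertex of maximum degree and building an incomplete extension $G^x_{ic}$, a case you reasonably set aside since, by Definition 2.1(i), a complete degree-extension can only exist when $n$ is even.
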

\begin{proof}
Following from Definition 2.1 we consider two cases.\\
Case (i): See [1]. Assume $n$ is even. Since $\Delta(G) < \frac{n}{2}$ the complement graph $G^c$ has $\delta(G^c) \geq \frac{n}{2}.$ Recalling Dirac's Theorem it follows that $G^c$ is Hamiltonian. So for each Hamilton cycle in $G^c$ exactly two perfect matchings exist in $G^c$. Therefore, $G$ has at least two complete degree-extensions, $G^x_c$.\\
Case (ii): Assume $n$ is odd. There exists a vertex $u \in V(G)$ such that $d_G(u) = \Delta(G).$ Consider the graph $G-u$ then certainly $\Delta(G-u) < \frac{n-1}{2}$. Following from Case (i) we have that $G-u$ has a degree extension $(G-u)^x_c.$ From Definition 2.1(ii) it follows that $(G-u)^x + u$ is an incomplete degree-extension $G^x_{ic}$ of $G$, with $\Delta(G^x_{ic}) \in \{\Delta(G), \Delta(G) + 1 \}.$
\end{proof}
It is observed that $\lfloor\frac{n}{2}\rfloor$ edges must be added to obtain an incomplete degree-extension $G^x_{ic}$. \textbf{Henceforth we will only consider a graph $G$ of even order $n$.}\\\\
If $n$ is even and if $k$ distinct Hamilton cycles exist in $G^c$ then $2k$ complete extended graphs exist. For graphs on even number of vertices we have the next result.
\begin{theorem}
Let $G$ be a graph of even order $n$. If the complement graph $G^c$ has a path of length $n-1$ (spanning path) the graph $G$  has a complete degree-extension $G^x_c$. 
\end{theorem}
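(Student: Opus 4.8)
The plan is to reduce the statement to the existence of a perfect matching in the complement graph $G^c$, since by Definition 2.1(i) a complete degree-extension $G^x_c = G + M$ exists precisely when $G^c$ admits a perfect matching $M$. Thus it suffices to extract such a matching from the hypothesised spanning path. I would first emphasise that this hypothesis is genuinely weaker than the Hamiltonicity invoked in Theorem 2.3: here we only require $G^c$ to contain a path on all $n$ vertices, not a Hamilton cycle, and correspondingly we cannot appeal to Dirac's Theorem but must argue directly.

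First I would label the vertices of the spanning path in order, writing it as $P\colon v_1 v_2 v_3 \cdots v_n$, where each edge $v_i v_{i+1} \in E(G^c)$ for $1 \le i \le n-1$. Since the path has length $n-1$ it visits every vertex of $G$ exactly once, so that $\{v_1, v_2, \dots, v_n\} = V(G)$ with no repetitions.

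The key step is then to select the alternate edges of $P$, namely $M = \{v_1 v_2,\, v_3 v_4,\, \dots,\, v_{n-1} v_n\}$. Because $n$ is even, these pairs partition the index set $\{1, 2, \dots, n\}$ into $\tfrac{n}{2}$ consecutive disjoint pairs, so the edges of $M$ are pairwise non-adjacent and together cover every vertex of $G$. Hence $M$ is a perfect matching, and $M \subseteq E(G^c)$ since each of its edges already belongs to $P$.

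Finally, setting $G^x_c = G + M$ and invoking Definition 2.1(i) yields the desired complete degree-extension, completing the argument. There is no genuine obstacle here; the one point requiring care is the parity bookkeeping, i.e. verifying that $n$ even guarantees the alternating selection terminates exactly on the last edge $v_{n-1} v_n$ with no vertex left uncovered (for odd $n$ the analogous construction would leave $v_n$ unmatched, which is precisely why the theorem restricts to even order and dovetails with the convention fixed just before the statement).
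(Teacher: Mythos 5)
Your proof is correct and takes essentially the same route as the paper: the paper's own (very terse) proof simply observes that a spanning path on an even number of vertices contains exactly one perfect matching, which is precisely the alternating edge set $\{v_1v_2, v_3v_4, \dots, v_{n-1}v_n\}$ you construct and verify explicitly. Your write-up just makes the paper's implicit selection of alternate path edges, and the parity check that it covers every vertex, fully explicit.
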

\begin{proof}
If the complement graph $G^c$ has a spanning  path of length $n-1$ exactly one perfect matching corresponding to that path in $G^c$ exists. The result follows immediately.
\end{proof}
\begin{corollary}
Let $G$ be a graph of even order $n$. If $V(G)$ can be partitioned into a number of subsets $V_1(G), V_2(G),\dots , V_\ell(G)$, each containing an even number of vertices such that the complement of each induced subgraph $\langle V_i(G)\rangle ^c$, $1 \leq i \leq \ell$ has a spanning path, then $G$ has a complete degree-extension $G^x_c$. 
\end{corollary}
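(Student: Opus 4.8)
The plan is to reduce everything to exhibiting a perfect matching in the global complement $G^c$, since by Definition 2.1(i) any such matching $M$ immediately yields the complete degree-extension $G^x_c = G + M$. The hypothesis hands us a partition of $V(G)$ into even-sized blocks $V_1(G), \dots , V_\ell(G)$, each of whose induced complement $\langle V_i(G)\rangle^c$ carries a spanning path. So the engine of the argument is the same observation that drives Theorem 2.4: a spanning path on an even number of vertices contains a perfect matching, obtained by selecting every second edge of the path starting from one endpoint.

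First I would fix, for each block $V_i(G)$, such a spanning path $P_i$ in $\langle V_i(G)\rangle^c$ and extract from it a perfect matching $M_i$ of $V_i(G)$; this is legitimate precisely because $|V_i(G)|$ is even. The next step is to observe that each edge of $M_i$ is a non-adjacency of $G$ with both endpoints lying in $V_i(G)$, hence an edge of the global complement $G^c$. This is the one point requiring a moment's care: an edge present in $\langle V_i(G)\rangle^c$ is a pair of vertices of $V_i(G)$ non-adjacent in $G$, and non-adjacency is inherited from the induced subgraph up to $G$ itself, so $M_i \subseteq E(G^c)$.

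Finally I would set $M = \bigcup_{i=1}^{\ell} M_i$ and verify it is a perfect matching of $V(G)$ in $G^c$. Because the blocks $V_i(G)$ are pairwise disjoint and cover $V(G)$, no two edges drawn from distinct $M_i$ can share a vertex, and every vertex of $G$ is saturated by exactly one $M_i$; thus $M$ is a perfect matching of $G^c$. Applying Definition 2.1(i) to this $M$ produces $G^x_c = G + M$, which completes the argument.

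The work here is essentially bookkeeping, so I do not anticipate a genuine obstacle. The only thing to guard against is conflating the edges of a block complement $\langle V_i(G)\rangle^c$ with cross-block non-edges that appear in $G^c$ between different $V_i(G)$; those cross-block edges are simply never used, and the point of partitioning into even blocks whose complements admit spanning paths is exactly that they need not be.
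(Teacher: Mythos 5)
Your proposal is correct and follows essentially the same route as the paper's own proof: extract the (unique) perfect matching from each spanning path of $\langle V_i(G)\rangle^c$, take the union over the disjoint blocks, and observe that this union is a perfect matching of $G^c$, so Definition 2.1(i) applies. You merely spell out the details the paper leaves implicit (evenness of each block guaranteeing the matching, the lifting of non-adjacencies from the induced complement to $G^c$, and the disjointness bookkeeping), which is fine but not a different argument.
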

\begin{proof}
Denote the spanning path in $\langle V_i(G)\rangle^c$ by$P^{(i)}$, $1 \leq i \leq \ell$. From each $P^{(i)}$ we select the unique perfect matching $S^{(i)}$ in $\langle V_i(G)\rangle^c.$ Clearly $\bigcup\limits_{i=1}^{\ell}S^{(i)}$ is a perfect matching in $G^c$.
\end{proof}
Clearly the smallest path for which Theorem 2.2 finds application is $P_4$. Let the vertices of $P_4$ consecutively be labeled $v_1,v_2,v_3,v_4.$ Then the unique $P^x_{4,c} = P_4 + (v_1v_3, v_2v_4).$ For $P_6$ we find the application of Theorem 2.2 allows $P^{x_1}_{6,c}= P_6 + (v_1v_6, v_2v_4, v_3v_5)$ or $P^{x_2}_{6,c}= P_6 + (v_1v_3, v_2v_5, v_4v_6)$ or $P^{x_3}_{6,c}= P_6 + (v_1v_4, v_2v_5, v_3v_6)$ or $P^{x_4}_{6,c}= P_6 + (v_1v_5, v_2v_4, v_3v_6)$ or $P^{x_5}_{6,c}= P_6 + (v_1v_4, v_2v_6, v_3v_5).$ 
\section{$\chi'$-Chromatic Sum and $\chi^+$-Chromatic Sum of Certain Graphs}
We recall that a vertex coloring of a graph $G$ such that adjacent vertices are not allocated the same color is called a proper coloring of $G$. The minimum number of colors in a proper coloring of $G$ is called the chromatic number, $\chi(G).$
\subsection{The $\chi'$-Chromatic Sum and $\chi^+$-Chromatic sum of Extended Paths and Extended Cycles}
In a paper [9],  Lisna and Sunitha introduce a new concept called the b-Chromatic Sum of a Graph. Let $\mathcal{C} = \{c_1,c_2, c_3, \dots , c_k\}$ allow a $b$-coloring $\mathcal{S}$ of $G$. As stated in [7] there are $k!$ ways of allocating the colors to the vertices of $G$. Let the \emph{color weight} $\theta(c_i)$ be the number of times a color $c_i$ is allocated to vertices. In general we refer to the \emph{color sum} of a coloring $\mathcal{S}$ and define it, $\omega(\mathcal{S}) = \sum\limits_{i=1}^{k}i\cdot\theta(c_i).$ The $b$-chromatic sum define by Lisna and Sunitha is given by $\varphi'(G) = min\{\sum\limits_{i=1}^{k}i\cdot\theta(c_i): \forall$ $b$-colorings of $G\}.$ This interesting new invariant motivates similar concepts in graph coloring. Also see [7]. 
\begin{definition}$[7]$
For a graph $G$ the $\chi'$-chromatic sum is defined to be:\\ $\chi'(G) = min\{\sum\limits_{i=1}^{k}i\cdot\theta(c_i): \forall$  minimum proper colorings of $G\}.$
\end{definition} 
\begin{definition}$[7]$
For a graph $G$ the $\chi^+$-chromatic sum is defined to be:\\ $\chi^+(G) = max\{\sum\limits_{i=1}^{k}i\cdot\theta(c_i): \forall$  minimum proper colorings of $G\}.$
\end{definition}
Further motivation for these new invariants is as follows. If the colors represent different technology types and the configuration requirement is that at least one unit per technology type must be placed at a point in a network without similar technology types being adjacent, two further considerations come into play. Firstly, if the higher indexed colors represent technology types with higher failure rate (risk) then the placement of the maximal number of higher indexed units is the solution to ensure a functional network. On the other hand is the lower indexed colors represent a less costly (procurement, installation, commissioning and maintenance) technology type, and minimising total cost is the priority, then the placement of maximal number of lower indexed units is the desired solution. We recall two important results from [7].
\begin{theorem}$[7]$
For a path $P_n$, $n \geq 1$ the $\chi'$-chromatic sum and $\chi^+$-chromatic sum are given by:\\
(i)
\begin{equation*} 
\chi'(P_n)) =
\begin{cases}
1, &\text {if $n =1$,}\\  
\frac{3n}{2}, &\text {if $n$ is even},\\
3\cdot\lfloor\frac{n}{2}\rfloor + 1, &\text {if $n$ is odd}.
\end{cases}
\end{equation*}\\
(ii)
\begin{equation*} 
\chi^+(P_n)) =
\begin{cases}
1, &\text {if $n =1$,}\\  
\frac{3n}{2}, &\text {if $n$ is even},\\
3\cdot\lfloor\frac{n}{2}\rfloor + 2, &\text {if $n$ is odd}.
\end{cases}
\end{equation*}  
\end{theorem}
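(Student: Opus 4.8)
The plan is to exploit the fact that $P_n$ is bipartite with a unique bipartition, which pins down all minimum proper colorings, and then to reduce both the minimization and the maximization to counting the size of a single color class. First I would dispose of the trivial case $n=1$, where the lone vertex admits only the coloring $v_1 \to c_1$, so $\omega = 1\cdot 1 = 1$ and both invariants equal $1$. For $n \geq 2$ the path is connected and bipartite, hence $\chi(P_n) = 2$, so every \emph{minimum} proper coloring uses exactly the two colors $c_1, c_2$. Because $P_n$ is connected its bipartition $(A,B)$ is unique, with $A = \{v_1, v_3, v_5, \dots\}$ the odd-indexed and $B = \{v_2, v_4, v_6, \dots\}$ the even-indexed vertices. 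Any proper $2$-coloring must paint one part entirely $c_1$ and the other entirely $c_2$, so there are exactly two minimum proper colorings, interchanged by the swap $c_1 \leftrightarrow c_2$. This matches the count $k! = 2! = 2$ recorded earlier.

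Next I would carry out the reduction. Using $\theta(c_1) + \theta(c_2) = n$, the color sum collapses to
\[
\omega(\mathcal{S}) = 1\cdot\theta(c_1) + 2\cdot\theta(c_2) = n + \theta(c_2).
\]
Hence minimizing $\omega$ over the minimum proper colorings is equivalent to minimizing $\theta(c_2) = |V_{c_2}|$, and maximizing $\omega$ is equivalent to maximizing $|V_{c_2}|$. Since the color class $V_{c_2}$ must be one of the two parts $A$ or $B$, the only attainable values of $\theta(c_2)$ are $|A| = \lceil n/2 \rceil$ and $|B| = \lfloor n/2 \rfloor$.

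Finally I would read off the three cases. If $n$ is even then $|A| = |B| = n/2$, so $\theta(c_2) = n/2$ in both colorings and $\chi'(P_n) = \chi^+(P_n) = n + n/2 = 3n/2$. If $n$ is odd the two colorings realize $\theta(c_2) = \lfloor n/2 \rfloor$ and $\theta(c_2) = \lceil n/2 \rceil = \lfloor n/2 \rfloor + 1$; substituting $n = 2\lfloor n/2 \rfloor + 1$ yields the minimum $n + \lfloor n/2 \rfloor = 3\lfloor n/2 \rfloor + 1$ and the maximum $n + \lfloor n/2 \rfloor + 1 = 3\lfloor n/2 \rfloor + 2$, exactly as stated. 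The only step demanding genuine care---as opposed to the routine arithmetic---is the structural claim that every minimum proper coloring is one of precisely these two; this rests on the uniqueness of the bipartition of the connected graph $P_n$, which guarantees that no color class other than $A$ or $B$ can occur and hence that $\theta(c_2)$ cannot take any value between $\lfloor n/2 \rfloor$ and $\lceil n/2 \rceil$.
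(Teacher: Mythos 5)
Your proof is correct. Note, however, that there is no in-paper proof to compare it against: the statement is Theorem 3.1 of the paper, which is explicitly recalled from reference [7] (Kok and Sudev, \emph{General Coloring Sums of Certain Graphs}, preprint) and is stated without any argument. So your proposal stands on its own, and it is the natural, essentially canonical argument: the case $n=1$ is trivial; for $n\geq 2$, since $P_n$ is connected and bipartite with $\chi(P_n)=2$, the uniqueness of the bipartition $(A,B)$ of a connected bipartite graph forces the two color classes of any minimum proper coloring to be exactly $A$ and $B$, so there are precisely two minimum proper colorings, related by the swap $c_1 \leftrightarrow c_2$. Your reduction $\omega(\mathcal{S}) = 1\cdot\theta(c_1)+2\cdot\theta(c_2) = n + \theta(c_2)$ then turns both the minimization and the maximization into the choice $\theta(c_2) \in \{\lfloor n/2\rfloor, \lceil n/2\rceil\}$, and the case arithmetic yields exactly the stated formulas. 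The one step that genuinely needed justification --- that no minimum proper coloring other than these two exists, i.e.\ that $\theta(c_2)$ can take no intermediate value --- is correctly pinned on the uniqueness of the bipartition, so the proof is complete.
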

\begin{theorem}$[7]$
For a cycle $C_n$ the $\chi'$-chromatic sum and $\chi^+$-chromatic sum are given by:\\
(i) \begin{center}$\chi'(C_n)) = 3\cdot\lceil\frac{n}{2}\rceil.$ \end{center}
(ii) \begin{equation*} 
\chi^+(C_n)) =
\begin{cases}
\frac{5n}{2}, &\text {if $n$ is even},\\
5\cdot\lfloor\frac{n}{2}\rfloor + 1, &\text {if $n$ is odd}.
\end{cases}
\end{equation*}   
\end{theorem}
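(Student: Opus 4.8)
The plan is to treat every relevant coloring of $C_n$ as a partition of $V(C_n)$ into independent sets, one per color-label, and to optimize the weight $\sum_{i} i\cdot\theta(c_i)$ by deciding how large each color class may be and which class is assigned to which label. Two structural facts drive everything: the independence number $\alpha(C_n)=\lfloor n/2\rfloor$, which caps the size of any single class, and the chromatic number $\chi(C_n)$, which is $2$ for even $n$ and $3$ for odd $n$ and which controls how many labels must actually be used. Since $\chi(C_n)\le 3$ for every cycle, I would work throughout with the label set $\{c_1,c_2,c_3\}$, allowing an empty class when only two colors are needed, so that both parities are handled by one extremal computation. First I would record the admissible class-size vectors: writing $\theta_i=\theta(c_i)$, a proper coloring corresponds to nonnegative integers with $\theta_1+\theta_2+\theta_3=n$ and $\theta_i\le\lfloor n/2\rfloor$, and for odd $n$ the extra constraint $\theta_i\ge 1$ for all three labels is forced because $C_n$ is not bipartite. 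I would then verify realizability: for even $n$ the balanced vector $(\tfrac n2,\tfrac n2,0)$ and its relabelings are realized by the alternating $2$-coloring of the bipartition, while for odd $n$ the vector $(\lfloor n/2\rfloor,\lfloor n/2\rfloor,1)$ is realized by the cyclic pattern $c_3,c_2,c_3,c_2,\dots,c_3,c_2,c_1$. These are exactly the extremal partitions needed.

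For $\chi'(C_n)$ (part (i)) the weight $\sum_i i\cdot\theta_i$ is minimized by a rearrangement argument: push as many vertices as possible onto the lowest label subject to $\theta_i\le\lfloor n/2\rfloor$. For even $n$ this yields $(\theta_1,\theta_2,\theta_3)=(\tfrac n2,\tfrac n2,0)$ with weight $\tfrac n2+2\cdot\tfrac n2=\tfrac{3n}{2}=3\lceil n/2\rceil$; for odd $n$, since all three labels must be used, the minimizer is $(\lfloor n/2\rfloor,\lfloor n/2\rfloor,1)$ with weight $\lfloor n/2\rfloor+2\lfloor n/2\rfloor+3=3\lfloor n/2\rfloor+3=3\lceil n/2\rceil$. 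Both parities collapse to the single formula $3\lceil n/2\rceil$, establishing part (i).

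For $\chi^+(C_n)$ (part (ii)) the same rearrangement runs in the opposite direction: fill the highest labels first, each still capped at $\lfloor n/2\rfloor$. For even $n$ this forces $\theta_3=\theta_2=\tfrac n2$ and $\theta_1=0$, realized by coloring the two sides of the bipartition with $c_3$ and $c_2$ (the alternating $c_2,c_3$ pattern), giving weight $3\cdot\tfrac n2+2\cdot\tfrac n2=\tfrac{5n}{2}$. For odd $n$, non-bipartiteness forces $\theta_1\ge 1$, so the maximizer is $(1,\lfloor n/2\rfloor,\lfloor n/2\rfloor)$ with weight $1+2\lfloor n/2\rfloor+3\lfloor n/2\rfloor=5\lfloor n/2\rfloor+1$, establishing part (ii).

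The main obstacle, and the point that genuinely separates $\chi^+$ from $\chi'$ in the even case, is that the maximizing coloring does not keep the weight on the labels $c_1,c_2$ but slides it up to $c_2,c_3$; I must argue both that this relabelling is admissible within the three-label palette dictated by $\chi(C_n)\le 3$, and that no such coloring can beat $\tfrac{5n}{2}$. The upper bound follows from the caps $\theta_i\le\lfloor n/2\rfloor$ together with there being only three labels, so that $\sum_i i\cdot\theta_i$ is maximized termwise by saturating $c_3$ and then $c_2$; the attainment follows from the realizability check above. Confirming that these capped optima are achieved by honest proper colorings rather than by merely admissible integer vectors is the step demanding the most care, and it is exactly why the explicit cyclic patterns are recorded at the outset.
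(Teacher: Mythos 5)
The paper never proves this statement: it is Theorem 3.4, recalled verbatim from reference [7] (``We recall two important results from [7]''), so there is no in-paper proof to compare yours against --- your proposal in effect supplies the missing argument. On its merits, your framework is sound and complete in outline: each color class is independent, so $\theta_i\le\alpha(C_n)=\lfloor n/2\rfloor$; for odd $n$ non-bipartiteness forces all three labels to occur; the rearrangement bound (fill the lowest labels first for $\chi'$, the highest first for $\chi^+$, subject to the cap) gives the stated values as bounds, and your explicit alternating and near-alternating patterns show attainment. This is exactly the right logical structure: caps give necessity, exhibited colorings give sufficiency.

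Two caveats. First, the point you yourself flag as the crux --- admitting the palette $\{c_1,c_2,c_3\}$ with $\theta_1=0$ for even $n$ --- is not actually licensed by the paper's Definitions 3.1--3.2, which maximize over \emph{minimum} proper colorings, i.e.\ colorings with exactly $\chi(C_n)=2$ colors drawn as $\{c_1,c_2\}$. Since an even cycle has a unique bipartition, the literal definition forces every minimum proper coloring to have weight $\tfrac{3n}{2}$, so $\chi^+(C_n)=\tfrac{3n}{2}$, and the quoted value $\tfrac{5n}{2}$ is obtainable only under the convention (evidently the one intended in [7]) that the two colors used may be the higher-indexed $c_2,c_3$ from a three-color palette. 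Your justification, ``because $\chi(C_n)\le 3$ over the family of cycles,'' is a family-dependent rule rather than a graph invariant: applied to even paths it would give $\chi^+(P_n)=\tfrac{5n}{2}$, contradicting Theorem 3.3(ii), which reports $\tfrac{3n}{2}$. You cannot derive this convention; you should state it as a hypothesis inherited from the target formula. Second, a small slip: the pattern $c_3,c_2,c_3,c_2,\dots,c_3,c_2,c_1$ realizes $(\theta_1,\theta_2,\theta_3)=(1,\lfloor n/2\rfloor,\lfloor n/2\rfloor)$, which is the maximizer in part (ii); the minimizer $(\lfloor n/2\rfloor,\lfloor n/2\rfloor,1)$ needed in part (i) requires the swapped labeling $c_1,c_2,\dots,c_1,c_2,c_3$. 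This is harmless --- permuting labels preserves realizability, as your own setup notes --- but the write-up should match the vector to the correct pattern.
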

From $P^{x_i}_{6,c}$, $1\leq i \leq 5$ we note that $P^{x_3}_{6,c}$ allows a minimum proper coloring $v_1\rightarrow c_1, v_2\rightarrow c_2, v_3\rightarrow c_1, v_4\rightarrow c_2, v_5\rightarrow c_1, v_6\rightarrow c_2,$ such that $\chi'(P^{x_3}_{6,c}) = 9 = min\{\chi'(P^{x_i}_6): 1\leq i \leq 5\}.$ Also we note that $P^{x_4}_6$, (by symmetry $P^{x_5}_6$ as well) allows a minimum proper coloring $v_1\rightarrow c_3, v_2\rightarrow c_2, v_3\rightarrow c_1, v_4\rightarrow c_3, v_5\rightarrow c_2, v_6\rightarrow c_3,$ such that $\chi^+(P^{x_4}_{6,c}) = 14 = max\{\chi^+(P^{x_i}_{6,c}): 1\leq i \leq 5\}.$ Up to isomorphism $P_6$ has 4 distinct complete extensions. These observations motivate the next definitions.
\begin{definition}
 For a graph $G$ of even order $n,$ such that the complement graph $G^c$ has at least one perfect matching then, if, up to isomorphism the graph $G$ has $G^{x_i}_c$, $1\leq i \leq \ell$  complete degree-extensions, then $\chi'(G^x_c) = min\{\chi'(G^{x_i}_c): 1\leq i \leq \ell\}.$
\end{definition}
\begin{definition}
 For a graph $G$ of even order $n,$ such that the complement graph $G^c$ has at least one perfect matching then, if up to isomorphism the graph $G$ has $G^{x_i}_c$, $1\leq i \leq \ell$ complete degree-extensions, then $\chi^+(G^x_c) = max\{\chi^+(G^{x_i}_c): 1\leq i \leq \ell\}.$
\end{definition}
\begin{theorem}
For a path $_n$, $n\geq 4$ and even, we have:\\
(i) $\chi'(P^x_{4,c}) = 7$ and $\chi^+(P^x_{4,c}) = 9$.\\
(ii) $\chi'(P^x_{n,c}) = \frac{3n}{2}$ and $\chi^+(P^x_{n,c}) = \frac{5n}{2}-1$, for $n\geq 6.$
\end{theorem}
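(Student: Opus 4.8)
The plan is to treat (i) and (ii) separately, reducing everything to two structural facts about an arbitrary complete degree-extension $G=P^x_{n,c}$. First, since adding a perfect matching of $P_n^c$ raises every degree by exactly one, $\Delta(G)=3$, so by Brooks' theorem $\chi(G)\in\{2,3\}$. Second, since $G\supseteq P_n$ and adding edges cannot enlarge an independent set, the independence number obeys $\alpha(G)\le\alpha(P_n)=\tfrac{n}{2}$. I will also use the color-sum identities: for a proper coloring with class sizes $\theta_1,\theta_2,\theta_3$ one has $\omega=\theta_1+2\theta_2+3\theta_3=2n-\theta_1+\theta_3$, and $\omega=n+\theta_2$ when only two colors occur. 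Part (i) is then immediate: $P_4^c$ has the unique perfect matching $\{v_1v_3,v_2v_4\}$, so $P^x_{4,c}=K_4-v_1v_4$ is the only extension; each of its minimum colorings forces $v_1,v_4$ into one class and $v_2,v_3$ into two singleton classes, and placing the doubled class at $c_1$ (resp.\ $c_3$) gives $\chi'=7$ (resp.\ $\chi^+=9$).

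For $\chi'(P^x_{n,c})$ the target is $\tfrac{3n}{2}$, and I would prove it by splitting on $\chi(G)$. If $G$ is bipartite then, being connected and containing $P_n$, its bipartition is exactly the balanced pair $(A,B)$ of odd- and even-indexed vertices, so every minimum coloring has $\{\theta_1,\theta_2\}=\{\tfrac{n}{2},\tfrac{n}{2}\}$ and $\omega=\tfrac{3n}{2}$. If instead $\chi(G)=3$, then the three class sizes $a\ge b\ge c\ge 1$ satisfy $a\le\alpha(G)\le\tfrac{n}{2}$, and the minimum color sum, obtained by coloring the largest class $c_1$, is $a+2b+3c=n+(b+c)+c\ge n+\tfrac{n}{2}+1$. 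Hence every extension has $\chi'\ge\tfrac{3n}{2}$, with equality only for bipartite ones. To realise equality I would exhibit a bipartite extension: the matching $M=\{v_{2k-1}v_{2k+2}:1\le k\le\tfrac{n}{2}-1\}\cup\{v_2v_{n-1}\}$ consists of non-edges of $P_n$ joining odd to even indices, so $G=P_n+M$ is bipartite with parts $(A,B)$, giving $\chi'(P^x_{n,c})=\tfrac{3n}{2}$.

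For $\chi^+(P^x_{n,c})$ the target is $\tfrac{5n}{2}-1$. For the upper bound, a bipartite extension gives $\omega=n+\theta_2\le n+\alpha(G)\le\tfrac{3n}{2}$, while a $3$-chromatic one gives $\omega=2n-\theta_1+\theta_3\le 2n-1+\alpha(G)\le\tfrac{5n}{2}-1$; since $\tfrac{5n}{2}-1>\tfrac{3n}{2}$ for $n\ge 6$, every extension satisfies $\chi^+\le\tfrac{5n}{2}-1$. For the matching lower bound I would design a single extension together with a minimum proper coloring attaining this value. Fix the coloring $c(v_1)=c_3$, $c(v_2)=c_2$, $c(v_3)=c_1$, and for $i\ge 4$ set $c(v_i)=c_3$ if $i$ is even and $c(v_i)=c_2$ if $i$ is odd; its class sizes are $(\theta_1,\theta_2,\theta_3)=(1,\tfrac{n}{2}-1,\tfrac{n}{2})$, so $\omega=\tfrac{5n}{2}-1$ and every edge of $P_n$ is bichromatic. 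It remains to choose $M$ to be a perfect matching of $P_n^c$ that is bichromatic for $c$ and makes $G$ non-bipartite (so that $c$ is a \emph{minimum} coloring). I would take $v_1v_3\in M$ (a non-edge whose triangle $v_1v_2v_3$ forces $\chi(G)=3$) and match the remaining vertices by a non-edge perfect matching between the $c_2$-class $\{v_2,v_5,v_7,\dots,v_{n-1}\}$ and the $c_3$-class $\{v_4,v_6,\dots,v_n\}$. These classes have equal size $\tfrac{n}{2}-1$, and in the ``allowed-pair'' bipartite graph each vertex is barred from at most two partners (its path-neighbours), so Hall's condition holds once both classes have size $\ge 3$, i.e.\ $n\ge 8$; the case $n=6$ is settled by the explicit extension $P^{x_4}_{6,c}$ recorded in the text.

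The main obstacle is precisely this lower bound for $\chi^+$: unlike $\chi'$, where a globally bipartite extension suffices, here I must produce an extension that is non-bipartite yet still carries an independent set of maximum size $\tfrac{n}{2}$ to serve as $c_3$, together with exactly one vertex at $c_1$. The delicate point is that the extremal coloring's $c_2$- and $c_3$-classes need not respect the original parity bipartition, so the matching edges realising them are genuinely two-color-constrained non-edges rather than parity-crossing edges; confirming that such a matching exists via the Hall argument, and separately checking the small order $n=6$ (where the counting bound is tight but the generic matching fails), is where the real care is required.
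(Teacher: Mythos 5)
Your proposal is correct, and it takes a genuinely different --- and substantially more rigorous --- route than the paper. The paper's proof is purely constructive: for $\chi'$ it exhibits a bipartite extension (adding parity-crossing chords) colored with two colors to get $\frac{3n}{2}$, and for $\chi^+$ it builds a partial extension $P^*_n = P_n + (v_1v_{t+1}, v_{t-1}v_{t+3}, v_tv_{t+2})$ with a three-coloring of weight $\frac{5n}{2}-1$, asserting that ``it is always possible to complete the extension'' because equally many degree-$2$ vertices remain in each of the two large color classes; it then simply declares these values to be $\min$ and $\max$ over all extensions without any argument. Since Definitions 3.3 and 3.4 take the minimum and maximum over \emph{all} complete degree-extensions up to isomorphism, the paper's proof has two unargued claims: that no extension does better than $\frac{3n}{2}$, and that none does better than $\frac{5n}{2}-1$. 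Your counting bounds fill exactly these gaps: using $\alpha(P^x_{n,c}) \le \alpha(P_n) = \frac{n}{2}$ and the identities $\omega = n + \theta_2$ (two colors, with the bipartition forced to be the parity classes by uniqueness of the bipartition of a connected graph containing $P_n$) and $\omega = 2n - \theta_1 + \theta_3$ (three colors, with $\theta_1 \ge 1$), you get $\chi' \ge \frac{3n}{2}$ and $\chi^+ \le \frac{5n}{2}-1$ for every extension, which the paper never establishes. Your constructions for the matching bounds are also cleaner: the explicit parity-preserving matching $\{v_{2k-1}v_{2k+2}\} \cup \{v_2v_{n-1}\}$ for the bipartite case, and the Hall's-theorem completion of $\{v_1v_3\}$ for the three-chromatic case, replace the paper's vague completion claim with an actual argument --- at the honest cost of noticing that Hall fails at $n=6$ and handling that order by the explicit extension $P^{x_4}_{6,c}$ already listed in the text (the paper's own sketch glosses over the fact that its generic completion argument is degenerate at small orders). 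The one point to tighten in your write-up is the Hall verification at $m = \frac{n}{2}-1 = 3$ (i.e.\ $n=8$): for a two-element set $S$ the minimum-degree bound $m-2$ alone gives only $|N(S)|\ge 1$, so you should add the observation that distinct vertices of the $c_2$-class have distinct forbidden pairs (their path-neighbourhoods differ), whence $|N(S)| \ge 2$; with that one line the argument is complete for all $n \ge 8$.
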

\begin{proof}
Case (i): Consider the unique $P^x_{4,c} = P_4 + (v_1v_3, v_2v_4)$ and allocate the minimum proper coloring $v_1\rightarrow c_1, v_2\rightarrow c_2, v_3\rightarrow c_3, v_4\rightarrow c_1$. It follows that $\chi'(P^x_{4,c}) = 7.$ Now interchange the colors $c_1$ and $c_3$. It follows that $\chi^+(P^x_{4,c}) = 9.$\\
Case (ii)(a): Consider $P_n$, $n\geq 6.$\\
Subcase (ii)(a)(1): If $\frac{n}{2}$ is odd, consider $P^x_{n,c} = P_n + (v_1v_{\frac{n}{2} + 1}, v_{\frac{n}{2}}v_n, v_{1+i}v_{n-i})$, $1\leq i \leq \frac{n}{2}-1$. Allocate the minimum proper coloring $v_1\rightarrow c_1, v_2\rightarrow c_2, v_3\rightarrow c_1,\dots, v_n\rightarrow c_2.$ So, $\theta(c_1) = \frac{n}{2}$ and $\theta(c_2) = \frac{n}{2}.$ Hence $\chi'(P^x_{n,c}) = 1\cdot\theta(c_1) + 2\cdot\theta(c_2) = \frac{n}{2} + 2\cdot\frac{n}{2} = \frac{3n}{2} = min\{\chi'(P^{x_i}_{n,c}): \forall P^{x_i}_{n,c}\}.$\\
Subcase (ii)(a)(2): If $\frac{n}{2}$ is even, consider $P^x_{n,c} = P_n + (v_1v_{\frac{n}{2}}, v_{\frac{n}{2}+1}v_n, v_{1+i}v_{n-i})$, $1\leq i \leq \frac{n}{2}-1$. The result now follows like in Subcase (ii)(a)(1).\\
Case (ii)(b): Consider $P_n$, $n\geq 6.$\\
Subcase (ii)(b)(1): If $\frac{n}{2}$ is odd let $t=\frac{n}{2}.$ Construct the graph $P^*_n = P_n + (v_1v_{t+1},v_{t-1}v_{t+3},v_tv_{t+2})$. Now allocate the proper coloring $v_1\rightarrow c_3, v_2\rightarrow c_2, v_3\rightarrow c_3,\dots, v_{t-1}\rightarrow c_2, v_t \rightarrow c_3, v_{t+1}\rightarrow c_1, v_{t+2}\rightarrow c_2, v_{t+3}\rightarrow c_3,\dots, v_{n-1}\rightarrow c_2, v_n\rightarrow c_3.$ Since equal number of vertices are left with degree 2 and color $c_3$,$c_2$ respectively it is always possible to complete the extension to obtain $P^x_{n,c}$ having the allocated minimum proper coloring. Clearly $\theta(c_1) =1,$ $\theta(c_2) = t-1$ and $\theta(c_3) = t.$ Hence, $\chi^+(P^x_{n,c}) = 3\cdot\frac{n}{2} + 2\cdot(\frac{n}{2} -1) + 1 = \frac{5n}{2} -1 = max\{\chi^+(P^{x_i}_{n,c}):\forall P^{x_i}_{n,c}\}.$\\
Subcase (ii)(b)(2): The result follows similar to Subcase (ii)(b)(1).
\end{proof}
\begin{theorem}
For a cycle $C_n$, $n = 2t \geq 4$, we have:\\
(i) $\chi'(C^x_{4,c}) =\chi^+(C^x_{4,c}) = 10$.\\
(ii) $\chi'(C^x_{n,c}) = \frac{3n}{2}$ and: 
\begin{equation*} 
\chi^+(C^x_{n,c})) =
\begin{cases}
\frac{5n}{2}-1, &\text {if $t$ is even, $n\geq 6$},\\
\frac{5n}{2} -3, &\text {if $t$ is odd, $n\geq 6$}.
\end{cases}
\end{equation*}   
\end{theorem}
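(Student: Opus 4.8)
The plan is to treat the two claims separately and, throughout, to exploit that $C^x_{n,c}$ is a $3$-regular graph: by Corollary~2.1(i) every vertex gains exactly one incident edge of the added perfect matching $M\subseteq E(C_n^c)$. First I would dispose of part (i). For $n=4$ the complement $C_4^c$ is itself a single perfect matching $\{v_1v_3,v_2v_4\}$, so the extension is forced and $C^x_{4,c}=C_4+\{v_1v_3,v_2v_4\}=K_4$. The only minimum proper colorings of $K_4$ assign four distinct colors, so $\theta(c_i)=1$ for each $i$ and $\omega=1+2+3+4=10$ for every such coloring; hence $\chi'(C^x_{4,c})=\chi^+(C^x_{4,c})=10$, with no optimization left to perform because the extension and (up to permutation) the coloring are unique.

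For $n\ge 6$ I would first settle $\chi'(C^x_{n,c})=\tfrac{3n}{2}$. Since $C_n$ is an even cycle it is bipartite with colour classes $O=\{v_1,v_3,\dots,v_{n-1}\}$ and $E=\{v_2,v_4,\dots,v_n\}$, each of size $t=\tfrac n2$. The bipartite complement graph between $O$ and $E$ (joining a vertex of $O$ to each vertex of $E$ non-adjacent to it in $C_n$) is $(t-2)$-regular, since every vertex is $C_n$-adjacent to exactly two vertices of the opposite class; for $t\ge 3$ a regular bipartite graph has a perfect matching $M_0$. The extension $C_n+M_0$ is then a bipartite $3$-regular graph whose unique bipartition is still $(O,E)$, so its minimum proper colorings use two colours with $\theta(c_1)=\theta(c_2)=t$, giving $\omega=t+2t=\tfrac{3n}{2}$. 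To see this is the minimum over all extensions, note that any other extension is either bipartite (again balanced, value $\tfrac{3n}{2}$) or, by Brooks' theorem, $3$-chromatic with a strictly larger minimum colour sum; moreover $\tfrac{3n}{2}$ is a genuine lower bound since in any $2$-colouring of the balanced bipartite extension both classes have size $t$.

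For $\chi^+(C^x_{n,c})$ only the $3$-chromatic (non-bipartite) extensions are relevant, since $\chi^+$ maximizes over extensions and the bipartite ones yield only the smaller value $\tfrac{3n}{2}$. Writing $\theta(c_1)+\theta(c_2)+\theta(c_3)=n$ gives $\omega=2n-\theta(c_1)+\theta(c_3)$, so the task is to make the top class $c_3$ as large and the bottom class $c_1$ as small as possible over a minimum (three-colour) proper colouring. Mirroring the path argument of Theorem~3.5(ii)(b), I would construct, for each parity of $t$, an explicit extension $C^*_n=C_n+(\text{three prescribed edges})$ whose special edges create a single odd cycle (forcing non-bipartiteness, hence $\chi=3$), then colour the vertices by alternating $c_3,c_2$ around the cycle while inserting the fewest possible occurrences of $c_1$ at the ``seam''. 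For $t$ even the construction is designed to realize the distribution $\theta(c_1)=1,\ \theta(c_2)=t-1,\ \theta(c_3)=t$, whence $\omega=1+2(t-1)+3t=\tfrac{5n}{2}-1$; for $t$ odd the alternation forces the top class to be one smaller, yielding $\theta(c_1)=2,\ \theta(c_2)=t-1,\ \theta(c_3)=t-1$ and $\omega=\tfrac{5n}{2}-3$. In each case I would verify that the three prescribed edges lie in $C_n^c$ and that the remaining vertices, left with matching numbers of colours $c_2$ and $c_3$, can be matched inside $C_n^c$ consistently with the colouring, exactly as in the path proof.

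The main obstacle is the optimality half of the $\chi^+$ computation, which is genuinely parity-sensitive. One must prove that the colour distribution produced by the construction is best possible over all $3$-chromatic extensions and all minimum proper colourings—equivalently, that no minimum proper colouring of any non-bipartite cubic extension can enlarge the top class $c_3$ (and shrink $c_1$) beyond what the construction achieves. This hinges on pinning down the largest independent set that can serve as the top colour class while keeping three colours necessary, and it is precisely here that the parity of $t$ must be made to separate the two stated values; establishing this extremal bound, together with checking that the constructed matching completes properly, is the delicate step, whereas the lower-bound constructions and part (i) are routine.
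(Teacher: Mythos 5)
Your part (i) and your $\chi'$ argument are essentially sound, and in fact cleaner than the paper's: the paper exhibits explicit matchings for each parity of $\frac n2$, whereas you get a bipartite extension abstractly (the bipartite complement between the two color classes of $C_n$ is $(t-2)$-regular, hence has a perfect matching), and you at least gesture at the lower bound that the paper merely asserts. The genuine gap is in the $\chi^+$ case with $t$ even, and your own identity $\omega = 2n-\theta(c_1)+\theta(c_3)$ exposes it. Every complete degree-extension $C^x_{n,c}$ is $3$-regular. In a $3$-regular graph of order $n$, an independent set $S$ with $|S|=\frac n2$ is incident with exactly $3\cdot\frac n2$ distinct edges, which is \emph{all} the edges of the graph; hence $V\setminus S$ is also independent and the graph is bipartite. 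Consequently, in any non-bipartite (i.e.\ $3$-chromatic) extension, every color class of a proper coloring has size at most $t-1$. The distribution $\theta(c_1)=1,\ \theta(c_2)=t-1,\ \theta(c_3)=t$ that you plan to realize for $t$ even therefore cannot come from a \emph{minimum} proper coloring: a class of size $t$ forces bipartiteness, and then a $3$-coloring is not minimum. Worse, since $\theta(c_2)\le t-1$ and $\theta(c_3)\le t-1$ force $\theta(c_1)\ge 2$, one gets $\omega \le 2n-2+(t-1)=\frac{5n}{2}-3$ for every minimum proper coloring of every non-bipartite extension, \emph{independently of the parity of $t$}. So the ``delicate optimality step'' you deferred is not merely unproven --- carried out, it contradicts the statement you were asked to prove: the value $\frac{5n}{2}-1$ for $t$ even is unattainable, and $\chi^+(C^x_{n,c})=\frac{5n}{2}-3$ for all even $n\ge 6$. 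The bound is attained for both parities; e.g.\ for $n=8$ take $C_8+(v_1v_5,\,v_2v_8,\,v_3v_7,\,v_4v_6)$ (non-bipartite via the triangle $v_4v_5v_6$) with classes $c_1\to\{v_5,v_8\}$, $c_2\to\{v_2,v_4,v_7\}$, $c_3\to\{v_1,v_3,v_6\}$, giving $\omega=2+6+9=17=\frac{5\cdot 8}{2}-3$.

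You should know that this flaw is inherited from the paper itself: its Subcase (ii)(b)(1) claims a minimum proper coloring of a cubic extension with $\theta(c_3)=t$, which is impossible by the counting above, so the published value for $t$ even is erroneous, and your proposal reproduces that error by mirroring the path construction of Theorem~3.5 --- which survives there only because $P^x_{n,c}$ is \emph{not} cubic (the two path-ends have degree $2$), so a non-bipartite extension of $P_n$ can still have an independent class of size $t$. Two smaller soft spots: your assertion that $3$-chromatic extensions have strictly larger minimum color sum than $\frac{3n}{2}$ needs exactly the same bound $\theta(c_1)\le t-1$ (giving $\omega\ge\frac{3n}{2}+2$, so it is true, but say so), and your appeal to ``exactly as in the path proof'' for completing the three prescribed chords to a perfect matching compatible with the coloring is left unchecked, though that part is repairable.
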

\begin{proof}
Case (i): Consider the unique $C^x_{4,c} = C_4 + (v_1v_3, v_2v_4) = K_4$.  Hence, $\chi^+(C^x_{4,c}) = 10.$\\
Case (ii)(a): Consider $C_n$, $n\geq 6.$\\
Subcase (ii)(a)(1): If $\frac{n}{2}$ is odd, consider $C^x_{n,c} = C_n + (v_1v_{\frac{n}{2} + 1}, v_{\frac{n}{2}}v_n, v_{1+i}v_{n-i})$, $1\leq i \leq \frac{n}{2}-1$. Allocate the minimum proper coloring $v_1\rightarrow c_1, v_2\rightarrow c_2, v_3\rightarrow c_1,\dots, v_n\rightarrow c_2.$ So, $\theta(c_1) = \frac{n}{2}$ and $\theta(c_2) = \frac{n}{2}.$ Hence $\chi'(C^x_{n,c}) = 1\cdot\theta(c_1) + 2\cdot\theta(c_2) = \frac{n}{2} + 2\cdot\frac{n}{2} = \frac{3n}{2} = min\{\chi'(C^{x_i}_{n,c}): \forall C^{x_i}_{n,c}\}.$\\
Subcase (ii)(a)(2): If $\frac{n}{2}$ is even, consider $C^x_{n,c} = C_n + (v_1v_{\frac{n}{2}}, v_{\frac{n}{2}+1}v_n, v_{1+i}v_{n-i})$, $1\leq i \leq \frac{n}{2}-1$. The result now follows like in Subcase (ii)(a)(1).\\
Case (ii)(b): Consider $C_n$, $n\geq 6.$\\
Subcase (ii)(b)(1): If $\frac{n}{2}$ is even let $t=\frac{n}{2}.$ Construct the graph $C^*_n = C_n + (v_1v_{t+1},v_{t-1}v_{t+3},v_tv_{t+2})$. Now allocate the proper coloring $v_1\rightarrow c_3, v_2\rightarrow c_2, v_3\rightarrow c_3,\dots, v_{t-1}\rightarrow c_2, v_t \rightarrow c_3, v_{t+1}\rightarrow c_1, v_{t+2}\rightarrow c_2, v_{t+3}\rightarrow c_3,\dots, v_{n-1}\rightarrow c_2, v_n\rightarrow c_3.$ Since equal number of non-adjacent vertices are left with degree 2 and half colored $c_2$ and other half colored $c_3$ respectively, it is always possible to complete the extension to obtain $C^x_{n,c}$ having the allocated minimum proper coloring. Clearly $\theta(c_1) =1,$ $\theta(c_2) = t-1$ and $\theta(c_3) = t.$ Hence, $\chi^+(C^x_{n,c}) = 3\cdot\frac{n}{2} + 2\cdot(\frac{n}{2} -1) + 1 = \frac{5n}{2} -1 = max\{\chi^+(C^{x_i}_{n,c}):\forall C^{x_i}_{n,c}\}.$\\
Subcase (ii)(b)(2): If $\frac{n}{2}$ is odd let $t=\frac{n}{2}.$ Construct the graph $C^*_n = C_n + (v_1v_{t+1},v_{t-1}v_{t+3},v_tv_{t+2})$. Now allocate the proper coloring $v_1\rightarrow c_3, v_2\rightarrow c_2, v_3\rightarrow c_3,\dots, v_{t-1}\rightarrow c_2, v_t \rightarrow c_3, v_{t+1}\rightarrow c_1, v_{t+2}\rightarrow c_2, v_{t+3}\rightarrow c_3,\dots, v_{n-1}\rightarrow c_1, v_n\rightarrow c_2.$ Since equal number of non-adjacent vertices are left with degree 2 and half colored $c_2$ and other half colored $c_3$ respectively, it is always possible to complete the extension to obtain $C^x_{n,c}$ having the allocated minimum proper coloring. Clearly $\theta(c_1) =2,$ $\theta(c_2) = t-1$ and $\theta(c_3) = t-1.$ Hence, $\chi^+(C^x_{n,c}) = 3\cdot(\frac{n}{2}-1) + 2\cdot(\frac{n}{2} -1) + 2 = \frac{5n}{2} -3 = max\{\chi^+(C^{x_i}_{n,c}):\forall C^{x_i}_{n,c}\}.$\\
\end{proof}
\subsection{On Bipartite Graphs}
It is well known that a graph $G$ is bipartite if and only $G$ contains no odd cycle. Up to equivalence a graph (connected) which contains no odd cycle has a unique vertex-set partition say $X,Y$ such that if $v,u \in X$ then $vu \notin E(G)$ and similarly for vertices in $Y$. If $|X|$ and $|Y|$ are even we say the partition is \emph{even balanced}, else if $|X|$ and $|Y|$ are odd it is \emph{odd balanced}. We also say ordered pairs $(a,b)$ and $(c,d)$ are \emph{bi-distinct} if and only if $a\neq c$ and $b\neq d.$

\begin{theorem}
If $G$ is isomorphic to a balanced bipartite graph $B_{n,m}$ with $n \geq m$ then $G$ has a complete extended graph $G^x_c$.
\end{theorem}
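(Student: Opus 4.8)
The plan is to produce a perfect matching $M$ in the complement $G^c$; by Definition 2.1(i) this immediately gives the complete degree-extension $G^x_c = G + M$. First I would record the structure of $G^c$. Writing the bipartition as $X \cup Y$ with $|X| = n \geq |Y| = m$, the sets $X$ and $Y$ are independent in $G$, so in $G^c$ the induced subgraphs $\langle X\rangle^c$ and $\langle Y\rangle^c$ are the complete graphs $K_n$ and $K_m$ respectively, while the cross edges of $G^c$ are exactly the non-adjacent pairs of $G$. Since $G \cong B_{n,m}$ is balanced, $n$ and $m$ have the same parity and the order $n+m$ is even, so a perfect matching is not excluded on parity grounds. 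I would then split into the even-balanced and odd-balanced cases.

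In the even-balanced case ($n, m$ both even) the cleanest route is Corollary 2.3. Take the partition $V(G) = X \cup Y$; each block has even cardinality, and $\langle X\rangle^c = K_n$, $\langle Y\rangle^c = K_m$ are complete, hence each possesses a spanning (Hamilton) path. Corollary 2.3 then yields $G^x_c$ directly. Equivalently, one may simply pick any perfect matching inside $K_n$ on $X$ and inside $K_m$ on $Y$ and take their union as $M$.

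The odd-balanced case ($n, m$ both odd) is where the real work lies, since now $K_n$ and $K_m$ individually admit no perfect matching and a cross edge of $G^c$ must be used. I would select one non-edge $xy$ of $G$ with $x \in X$ and $y \in Y$; deleting $x$ and $y$ leaves $X \setminus \{x\}$ and $Y \setminus \{y\}$ of even sizes $n-1$ and $m-1$, still inducing complete graphs in $G^c$, each of which has a perfect matching. Adjoining the edge $xy$ to these two matchings produces a perfect matching of all $n+m$ vertices, and hence the desired $M$.

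The main obstacle is precisely the existence of such a cross non-edge $xy$ in the odd-balanced case. This requires that $G$ is not the complete bipartite graph $K_{n,m}$: indeed, if $G = K_{n,m}$ with $n, m$ both odd, then $G^c = K_n \cup K_m$ is a disjoint union of two odd cliques and has no perfect matching at all (for instance $K_{3,3}$, whose complement is two disjoint triangles). I would therefore either add the hypothesis that some vertex of $X$ is non-adjacent to some vertex of $Y$ (so that $B_{n,m} \neq K_{n,m}$), or restrict the odd-balanced statement accordingly; with that caveat in place the construction above goes through and $G^x_c = G + M$ exists.
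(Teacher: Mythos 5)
Your construction is correct and takes a cleaner route than the paper. The paper's proof works with the \emph{maximum} number $\ell$ of bi-distinct non-adjacent cross pairs (i.e.\ a maximum matching in the bipartite part of $G^c$): it adds $\ell$ or $\ell-1$ of those cross edges, chosen so that an even number of vertices remains on each side, and then pairs the leftovers within $X$ and within $Y$ (which is possible since $\langle X\rangle^c$ and $\langle Y\rangle^c$ are cliques). You instead use as \emph{few} cross edges as possible: none in the even-balanced case, where Corollary 2.3 applied to the partition $X\cup Y$ finishes immediately, and exactly one cross non-edge $xy$ in the odd-balanced case. Both arguments amount to exhibiting a perfect matching in $G^c$; the paper's maximal-$\ell$ version is not wasted effort, though, because that specific construction is reused in Theorem 3.7 to compute $\chi'(G^x_c)$ in terms of $\ell$, whereas yours only settles existence. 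More importantly, your final caveat identifies a genuine defect in the theorem as stated: when $G=K_{n,m}$ with $n,m$ odd, there is no cross non-edge, $\ell=0$, and $G^c=K_n\cup K_m$ has no perfect matching (e.g.\ $K_{3,3}$, whose complement is two disjoint triangles), so no complete degree-extension exists. The paper's proof silently breaks at exactly this point --- its odd-balanced subcase instructs one to ``select any $\ell-1$ bi-distinct vertex pairs,'' which is impossible for $\ell=0$ --- so the hypothesis you propose (some vertex of $X$ non-adjacent to some vertex of $Y$ when $n,m$ are odd) is not optional but necessary for the theorem to be true.
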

\begin{proof}
Case 1: Consider the even balanced bipartite graph $B_{n,m}$ which is isomorphic to $G$. Without loss of generality assume $|X| =n \geq m = |Y|$. Label the vertices in $X$, $v_1,v_2,v_3\dots,v_n$ and those in $Y$, $u_1,u_2,u_3,\dots,u_m.$ Identify the maximum number of  bi-distinct non-adjacent vertex pairs $(v_i,u_j)$ in $G$. Assume there are $\ell$ such bi-distinct pairs.\\
Subcase 1(i): If $\ell$ is even, add the edges $v_iu_j$ for all $\ell$ bi-distinct pairs. Clearly $n-\ell$ vertices in $X$ can be paired into bi-distinct vertex pairs. Add those corresponding pairwise edges. Similarly, $n-\ell$ vertices in $Y$ can be paired into bi-distinct vertex pairs. Add those corresponding pairwise edges as well. The new graph obtained through this construction is a complete degree-extension $G^x_c$ of $G$.\\
Subcase 1(ii): If $\ell$ is odd select any $\ell -1$ bi-distinct vertex pairs. Construct a new graph similar to Case 1(i) which is clearly $G^x_c$.\\
Case 2: Consider the odd balanced bipartite graph $B_{n,m}$ which is isomorphic to $G$. As before assume $|X| = n \geq m = |Y|$ and label the vertices similarly.\\
Subcase 2(i): If If $\ell$ is even select any $\ell -1$ bi-distinct vertex pairs. Construct a new graph similar to Case 1(i) which is clearly $G^x_c$.\\
Subcase 2(ii): If $\ell$ is even, construct a new graph similar to Case 1(ii) which is clearly $G^x_c$.\\
\end{proof}
We note that in all cases above the subgraph $G-\{v_i,u_j:\forall \ell$ or $\ell-1,$ $(v_i,u_j)$ is a bi-distinct non-adjacent pair of vertices$\}$ is a complete bipartite subgraph.
\begin{theorem}
If $G$ is isomorphic to a balanced bipartite graph $B_{n,m}$ with $|X|=n \geq m = |Y|$ and $G$ has a maximum $\ell$ bi-distinct pairs of vertices $(v_i,u_j)$, $v_i \in X$, $u_j \in Y$, then:\\
Case 1: If both $n$, $m$ are even:\\
(i) If $\ell$ is even, $\chi'(G^x_c) = \frac{3(n-\ell)}{2} +\frac{7(m-\ell)}{2} + 3\ell$.\\
(ii) If $\ell$ is odd, $\chi'(G^x_c) = \lfloor\frac{n-\ell}{2}\rfloor + 2\cdot\lceil\frac{n-\ell}{2}\rceil + \frac{7(m-\ell+1)}{2} + 3\ell$.\\\\
Case 2: If both $n$, $m$ are odd:\\
(i) If $\ell$ is even, $\chi'(G^x_c) = \lfloor\frac{n-\ell}{2}\rfloor + 2\cdot\lceil\frac{n-\ell}{2}\rceil + \frac{7(m-\ell+1)}{2} + 3\ell$.\\
(ii) If $\ell$ is odd, $\chi'(G^x_c) = \frac{3(n-\ell)}{2} +\frac{7(m-\ell)}{2} + 3\ell$.
\end{theorem}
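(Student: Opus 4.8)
The plan is to read off the structure of $G^x_c$ from the construction establishing that a balanced bipartite graph admits a complete degree-extension. Fix a maximum family of $\ell$ bi-distinct non-adjacent pairs and let $A\subseteq X$ and $B\subseteq Y$ be the vertices these pairs saturate, writing $A'=X\setminus A$ and $B'=Y\setminus B$. By the remark recorded immediately after that theorem the induced subgraph $\langle A'\cup B'\rangle$ is complete bipartite, and the added perfect matching $M$ of $G^c$ decomposes into a cross-matching pairing $A$ with $B$, a perfect matching inside $A'$, and a perfect matching inside $B'$. First I would fix the chromatic number: choosing one matching edge inside $A'$ and one inside $B'$ produces four pairwise adjacent vertices (two joined inside $X$, two inside $Y$, the remaining four edges supplied by the complete bipartite part), so $\chi(G^x_c)\geq 4$; colouring $X$ from $\{c_1,c_2\}$ and $Y$ from $\{c_3,c_4\}$ while respecting the two internal matchings is proper, so $\chi(G^x_c)=4$ whenever $A'$ and $B'$ are both nonempty.

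Next I would exhibit the minimum-weight $4$-colouring behind each displayed value. Since every vertex of $A'$ is joined to every vertex of $B'$, the colours on $A'$ and on $B'$ are disjoint, and each internal matching forces its set to split evenly across its colour pair. Assigning the pair $\{c_1,c_2\}$ to $A'$, the pair $\{c_3,c_4\}$ to $B'$, colour $c_1$ to the independent set $A$ (which is non-adjacent to $A'$), and colour $c_2$ to $B$ gives colour weights $\theta(c_1)=|A|+\tfrac{|A'|}2$, $\theta(c_2)=|B|+\tfrac{|A'|}2$, $\theta(c_3)=\tfrac{|B'|}2$, $\theta(c_4)=\tfrac{|B'|}2$, whose weighted sum I would simplify to the stated expression.

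Then comes the parity bookkeeping. When $n-\ell$ (equivalently $m-\ell$) is even, all $\ell$ cross-pairs can be saturated, so $|A|=|B|=\ell$, $|A'|=n-\ell$, $|B'|=m-\ell$, and the sum collapses to $\tfrac{3(n-\ell)}2+\tfrac{7(m-\ell)}2+3\ell$; this covers Cases 1(i) and 2(ii). When $n-\ell$ is odd a perfect matching inside $A'$ is impossible, so I would retain only $\ell-1$ cross-pairs, making $|A'|=n-\ell+1$ and $|B'|=m-\ell+1$ both even; the contribution of the odd quantity $n-\ell$ is then recorded through $\lfloor\tfrac{n-\ell}2\rfloor$ and $\lceil\tfrac{n-\ell}2\rceil$, yielding Cases 1(ii) and 2(i).

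The step I expect to be the main obstacle — and the one needing the most care — is proving that this colouring is simultaneously \emph{proper} and \emph{optimal}. Properness is not automatic: a vertex of $B$ receiving the cheap colour $c_2$ must have no $A'$-neighbour coloured $c_2$, so I would need to use the freedom in choosing the internal matching of $A'$ (its vertices induce a complete graph in $G^c$, so every perfect matching is available) to keep the $A'$-neighbourhood of $B$ monochromatic in $c_1$, and to argue this can always be arranged for a suitable maximum family of pairs. Optimality requires a matching lower bound: the completely-joined pair $A',B'$ forces disjoint colour pairs and the even split on each, giving $\tfrac{3|A'|}2+\tfrac{7|B'|}2$ as an unavoidable cost, after which one must show that no redistribution of colours among $A$, $B$ and the two matchings — and no competing complete extension $G^{x_i}_c$ — beats the displayed value. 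Pinning down the exact additive constants in the odd-parity cases, where the single discarded cross-pair perturbs the counts, is the most delicate part of the estimate.
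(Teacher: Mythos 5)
Your construction is, step for step, the paper's own: fix the $\ell$ (or, in the odd cases, $\ell-1$) cross-pairs, match the leftover sets $A'\subseteq X$ and $B'\subseteq Y$ internally, invoke maximality of $\ell$ to see that $A'\cup B'$ induces a complete bipartite subgraph (so the added internal edges create $K_4$'s), and then colour $A$ and half of $A'$ with $c_1$, $B$ and the other half of $A'$ with $c_2$, and $B'$ with $c_3,c_4$. The paper does exactly this and then asserts ``Clearly the assignment of colors is a minimum proper coloring''; the two obstacles you single out --- properness against the original edges of $G$, and optimality over all colourings and all competing extensions $G^{x_i}_c$ --- are precisely what that ``Clearly'' conceals. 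So in terms of approach you have reproduced the paper's proof, gaps included.

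Those gaps, however, are not merely delicate: they are fatal, and your proposed repairs do not work. For properness, your idea of keeping the $A'$-neighbourhood of $B$ ``monochromatic in $c_1$'' is impossible whenever a vertex of $B$ is adjacent in $G$ to both ends of an internal matching edge of $A'$, since a matched pair must receive two colours; and maximality of $\ell$ only forbids non-edges in $A'\times B'$, not in $A'\times B$, so this situation is generic. The only always-proper variant of the scheme keeps $\{c_1,c_3\}$ on the $X$-side and $\{c_2,c_4\}$ on the $Y$-side, and its weight is $3\ell+2(n-\ell)+3(m-\ell)$, which equals the stated $\frac{3(n-\ell)}{2}+\frac{7(m-\ell)}{2}+3\ell$ only when $n=m$. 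Concretely, for $G=K_{6,4}-v_1u_1-v_2u_2$ ($n=6$, $m=4$, $\ell=2$) a parity check shows every complete extension either restores both cross edges or neither, and computing both possibilities gives $\chi'(G^x_c)=20$, while the Case 1(i) formula claims $19$ --- the value $19$ arises exactly from the improper colouring. Your odd-case bookkeeping also does not ``yield Cases 1(ii) and 2(i)'' as you assert: with $\ell-1$ pairs retained, your own weights sum to $3(\ell-1)+3\lceil\frac{n-\ell}{2}\rceil+\frac{7(m-\ell+1)}{2}$, which is $2$ less than the stated formula; for $G=K_{4,4}-v_1u_1$ ($\ell=1$) the true value is $20$ (every colour class of every extension has at most two vertices, so the sum is $2(1+2+3+4)$), whereas the formula gives $22$. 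So what you have is the paper's construction together with an accurate diagnosis of why its proof is incomplete --- but the missing steps cannot be supplied, because the statement itself fails for $n\neq m$ and in the odd-$\ell$ cases; any honest write-up has to correct the formulas rather than prove them.
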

\begin{proof}
Case 1: Assume both $|X|$ and $|Y|$ are even. Label the vertices in $X$, $v_1,v_2,v_3\dots,v_n$ and those in $Y$, $u_1,u_2,u_3,\dots,u_m.$\\
Subcase 1(i): Let $\ell$ be even. Without loss of generality label the vertices of $X,Y$ which belong to the $\ell$ bi-distinct non-adjacent pairs as $v_1, v_2, v_3,\dots, v_\ell$ and $u_1,u_2,u_3,\dots,u_\ell$, respectively such that $(v_i,u_j)$, $1\leq i \leq \ell$ are non-adjacent. Add edges $v_iu_i$, $1\leq i \leq \ell$ and assign the colors $v_1\rightarrow c_1, u_i\rightarrow c_2$, $1\leq i \leq \ell.$ Label the rest of the vertices of $X$ and $Y$ to be $v_{\ell+1},v_{\ell+2}\dots,v_n$ and $u_{\ell+1},u_{\ell+2},\dots,u_m$, respectively. Now add the edges $v_{\ell+1}v_{\ell+2},u_{\ell+1}u_{\ell+2}$ and assign the colors $v_{\ell+1}\rightarrow c_1,v_{\ell+2}\rightarrow c_2, u_{\ell+1}\rightarrow c_3, u_{\ell+2} \rightarrow c_4$. Note that, $v_{\ell+1}, v_{\ell+2}, u_{\ell+1}, u_{\ell+1}$ forms a complete graph of order $4$ ($K_4$) in $G^x_c$ since we have chosen $\ell$ to be maximum.  Recursively proceed with the edge-adding and coloring protocols until only the $(n-m)$ vertices $v_{m+1}, v_{m+2},\dots, v_n$ are left. Add the edges $v_{m+1}v_{m+ 2}, v_{m+3}v_{m+ 4},\dots v_{n-1}v_n,$ and assign the colors, alternating $v_{m+1}\rightarrow c_1, v_{m+2}\rightarrow c_2,\dots v_{n-1}\rightarrow c_1, v_n\rightarrow c_2.$ Clearly the assignment of colors is a minimum proper coloring of $G^x_c$.\\\\
It follows that $\theta(c_1) = \frac{n-\ell}{2} +\ell,$ $\theta(c_2) = \frac{n-\ell}{2} +\ell,$ $\theta(c_3) = \frac{m-\ell}{2},$ $\theta(c_4) = \frac{m-\ell}{2}.$\\
$\therefore \chi'(G^x_c) = min\{\sum\limits_{i=1}^{k}i\cdot\theta(c_i): \forall$  minimum proper colorings of $G^x\} = \theta(c_1) + 2\cdot\theta(c_2) + 3\cdot\theta(c_3) +\\ 4\cdot\theta(c_4) = \frac{3(n-\ell)}{2} +\frac{7(m-\ell)}{2} + 3\ell$.\\
Subcase 1(ii): Follows similar to Case (i) by considering only $\ell-1$ of the maximum bi-distinct non-adjacents pairs of vertices.\\\\
Case 2: Assume both $|X|$ and $|Y|$ are odd.\\
Subcase 2(i): Let $\ell$ be even and label the vertices as before. The proof follows from similar reasoning found in Subcase 1(ii).\\
Subcase 2(ii): Let $\ell$ be odd and label the vertices as before. The proof follows from similar reasoning found in Subcase 1(i).
\end{proof}
 Note that the maximum matching in a bipartite graph hence, the maximum bi-distinct non-adjacent pairs of vertices and the value $\ell$ can be determined by amongst others, the $n^{\frac{5}{2}}$-Algorithm described by Hopcroft and Karp in [6]. We now discuss a special case of this theorem where $|X|=|Y|$, which means both the partitions of the bipartite graph have equal cardinality. This class of graph inclues all regular bipartite graphs. For this case we calculate both   $\chi'(G^x)$ and $\chi^{+}(G^x_c)$.

\begin{definition}
Let $G$ be a bipartite graph, with vertex partitions $X$ an $Y$. Let $S$ be a subset of $X$. For, $v \in S$  we define $N^c(v)$ to be the set of vertices in $Y$ that are not adjacent to $v$. We define $N^c(S)$ to be $\bigcup_{v \in S} N^c(v)$.
\end{definition}

\begin{corollary}
Let $G$ be a bipartite graph of even order $n$, with vertex partitions $X$ an $Y$, such that $|X|=|Y|$. If for every $S \subseteq X$, $|N^c(S)| \geq |S|$ then,\\
(i) $\chi'(G^x_c)=\frac{3n}{2}$.\\
(ii) $\chi^{+}(G^x_c)= \frac{5n}{2}-3$.
\end{corollary}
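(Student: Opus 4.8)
\section*{Proof proposal}

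The plan is to recognize the hypothesis as Hall's condition and then lean on Theorem 3.6 as far as possible. Form the bipartite \emph{non-adjacency} graph $\widetilde{G}$ on the parts $X,Y$ in which $v\in X$ is joined to $u\in Y$ exactly when $vu\notin E(G)$; the neighbourhood of a set $S\subseteq X$ in $\widetilde{G}$ is then precisely $N^c(S)$. The condition $|N^c(S)|\ge|S|$ for every $S\subseteq X$ is therefore Hall's marriage condition for $\widetilde{G}$, so $\widetilde{G}$ has a matching saturating $X$; since $|X|=|Y|=\frac n2$ this matching is perfect. Consequently the maximum number $\ell$ of bi-distinct non-adjacent pairs equals $\frac n2$, i.e. every vertex of $X$ is matched to a distinct non-neighbour in $Y$. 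This single observation drives both parts.

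For (i) I would substitute into Theorem 3.6, taking both part sizes equal to $\frac n2$ and $\ell=\frac n2$, so that the quantities $n-\ell$ and $m-\ell$ of that theorem vanish. Since $\ell$ then shares the parity of each part, only subcase $1(i)$ (when $\frac n2$ is even) or subcase $2(ii)$ (when $\frac n2$ is odd) can occur, and each reduces to $3\ell=\frac{3n}{2}$. Geometrically the extension degenerates: with no vertices left over after the cross-matching is added, $G^x_c$ is just $G$ plus a perfect matching lying between $X$ and $Y$, hence still bipartite, and its minimum proper colouring has the balanced classes $|V_{c_1}|=|V_{c_2}|=\frac n2$, giving $\chi'(G^x_c)=\frac n2+2\cdot\frac n2=\frac{3n}{2}$.

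For the lower bound in (ii) I would exhibit one extension. Re-index so that the Hall matching pairs $v_i$ with $u_i$ and $v_iu_i\notin E(G)$. Replace the two cross-edges $v_1u_1,v_2u_2$ by the within-part non-edges $v_1v_2\in\langle X\rangle$ and $u_1u_2\in\langle Y\rangle$, keeping $v_iu_i$ for $i\ge 3$; the edge set added is still a perfect matching of $G^c$, so this is a legitimate $G^x_c$. Because $G$ is connected and bipartite, $v_1$ and $v_2$ are joined by an even path, so the new edge $v_1v_2$ closes an odd cycle and $\chi(G^x_c)\ge 3$. Now colour $v_1\to c_1$ and every other vertex of $X$ by $c_3$, and $u_1\to c_1$ with every other vertex of $Y$ by $c_2$. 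This is proper: two $c_1$-vertices could clash only across $v_1u_1$, which is exactly the Hall non-edge we discarded; every remaining cross-edge joins $\{c_1,c_3\}$ to $\{c_1,c_2\}$ without repetition, and the two within-part edges are properly coloured. Hence $\chi(G^x_c)=3$, and with $\theta(c_1)=2,\ \theta(c_2)=\theta(c_3)=\frac n2-1$ the colour sum is $2+2(\frac n2-1)+3(\frac n2-1)=\frac{5n}{2}-3$, so $\chi^+(G^x_c)\ge\frac{5n}{2}-3$.

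The hard part is the matching upper bound: no complete degree-extension, together with one of its minimum proper colourings, may beat $\frac{5n}{2}-3$. I would aim to show that the weighted sum $\sum_i i\,\theta(c_i)$ is maximised by a $3$-chromatic extension of the type above. The genuine obstacle is controlling the chromatic number of the competing extensions. For $G+M$ with $M$ a perfect matching of $G^c$, the induced graphs $\langle X\rangle$ and $\langle Y\rangle$ are each a matching, yet it is \emph{not} automatic that $\chi(G+M)\le 3$, and a colouring with $4$ or more colours could a priori raise the weighted sum. So the crux is to prove that under the Hall hypothesis every complete degree-extension admits a minimum proper colouring with weighted sum at most $\frac{5n}{2}-3$ — equivalently, to neutralise extensions of chromatic number $\ge 4$. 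I would attack this by bounding each colour class by the independence number and by exploiting that $M$ raises every degree by exactly one; if those estimates do not suffice to cap the sum, the argument must invoke the extra structure the Hall condition forces on $\langle X\rangle$ and $\langle Y\rangle$, and pinning this down is where the main effort lies.
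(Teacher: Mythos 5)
Your proposal follows the paper's proof almost exactly: part (i) is the same Hall's-theorem argument producing a perfect matching $M$ of cross non-edges so that $G+M$ is a bipartite extension with balanced colour classes summing to $\frac{3n}{2}$, and part (ii) uses precisely the paper's construction $M-x_1y_1-x_2y_2+x_1x_2+y_1y_2$ with the colouring $\theta(c_1)=2$, $\theta(c_2)=\theta(c_3)=\frac{n}{2}-1$ giving $\frac{5n}{2}-3$. The matching upper bound that you correctly flag as the unresolved hard step (ruling out extensions and minimum colourings with larger weighted sum, in particular $4$-chromatic extensions) is not proved in the paper either --- it is dismissed with the words ``this obviously shows that'' --- so your attempt contains everything the published argument does, together with a more careful verification of properness and of $\chi(G^x_c)=3$ via the odd cycle created by $x_1x_2$.
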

\begin{proof}
Case (i): Note that in $G$, $X$ and $Y$ are independent sets. Hence, in $G^c$ vertices in $\langle X\rangle$ and $\langle Y\rangle$ will be cliques of size $|X|$ and $|Y|$, respectively. Let us denote these cliques as $K_{|X|}$ and $K_{|Y|}$ respectively. Also, note that the graph $G^c- E(K_{|X|}) - E(K_{|Y|})$ is bipartite. By Hall's Theorem the graph $G^c- E(K_{|X|}) - E(K_{|Y|})$ has a perfect matching $M$, since in $G$, for every $S \subseteq X$, $|N^c(S)| \geq |S|$. Thus, $G$ can always be extended to a bipartite graph. Hence, $\chi'(G^x_c)=\frac{3n}{2}$.\\
Case (ii): Now, let us assign a different extension of $G$. Choose two arbitrary edges from $M$, $x_1y_1$ and $x_2y_2$, such that $x_1, x_2 \in X$ and $y_1,y_2 \in Y$. Now, for extending $G$, use the edges $M-x_1y_1-x_2y_2+x_1x_2+y_1y_2$. This can always be done since the graph is bipartite. Now, we can color $x_1$ and $y_1$ using $c_1$.  Vertices in $X-x_1$ and $Y-y_1$ can be colored using $c_2$ and $c_3$, respectively. This obviously shows that, $\chi^{+}(G^x_c)= \frac{5n}{2}-3$.
\end{proof}

\section{On some specific classes of Graphs}

\begin{lemma}
Let $G$ be a graph of order $n$, such that $\delta(G)> \frac{n}{2}$, then $diam(G) \leq 2.$
\end{lemma}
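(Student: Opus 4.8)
The plan is to show that any two vertices of $G$ lie at distance at most $2$, which is exactly the assertion $diam(G) \leq 2$. Since vertices joined by an edge are already at distance $1$, it suffices to prove that every pair of \emph{non-adjacent} vertices has a common neighbor, thereby placing them at distance exactly $2$.

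First I would fix two non-adjacent vertices $u, v \in V(G)$ and examine their neighborhoods $N(u)$ and $N(v)$. By hypothesis $\delta(G) > \frac{n}{2}$, so $|N(u)| = d_G(u) > \frac{n}{2}$ and likewise $|N(v)| > \frac{n}{2}$, whence $|N(u)| + |N(v)| > n$.

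Next I would apply the inclusion-exclusion identity $|N(u) \cap N(v)| = |N(u)| + |N(v)| - |N(u) \cup N(v)|$. Since $N(u) \cup N(v) \subseteq V(G)$ we have $|N(u) \cup N(v)| \leq n$, and combining this with the bound from the previous step yields $|N(u) \cap N(v)| > n - n = 0$. As this cardinality is a non-negative integer, it is at least $1$, so there exists a vertex $w$ adjacent to both $u$ and $v$. Hence $d_G(u,v) = 2$, and since $u,v$ were an arbitrary non-adjacent pair, $diam(G) \leq 2$.

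There is no genuinely hard step here; the whole argument is a pigeonhole count on the two neighborhoods. The only point requiring a moment's care is the bookkeeping of which vertices can belong to $N(u)$ and $N(v)$ — in particular that $u \notin N(u)$ and, by non-adjacency, $v \notin N(u)$ — but this causes no difficulty, since the crude bound $|N(u) \cup N(v)| \leq n$ already forces a nonempty intersection. I would also remark in passing that the same count guarantees $G$ is connected, so that $diam(G)$ is well defined in the first place.
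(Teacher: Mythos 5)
Your proof is correct and is essentially the paper's argument: the paper computes the entries of $A^2$ (where $A$ is the adjacency matrix) and notes each is a scalar product of a row and a column having more than $\frac{n}{2}$ ones, which is precisely your pigeonhole count $|N(u)|+|N(v)|>n$ forcing a common neighbor, just written in matrix language. Your set-theoretic phrasing is, if anything, cleaner, since it makes explicit the strict inequality that the paper's wording (``at least $\frac{n}{2}$ entries'') glosses over.
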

\begin{proof}
Let $A$ be the adjacency matrix of $G$. Every entry $a_{i,j}$ of $A^2$ is the scalar product of the $i$-th row and the $j$-th column of $A$. Since every row and column of $A$ has at least $\frac{n}{2}$ entries equal to 1, every $a_{i,j} > 0$, $a_{i,j}$ an entry of $A^2.$ Hence for any two distinct vertices $v,u \in V(G)$ we have $d_G(v,u) \leq 2.$ Therefore $diam(G) \leq 2.$
\end{proof}
\begin{theorem}
Let $G$ be a graph of order $n$, such that $\delta(G)> \frac{n}{2}$. Then $G$ is bipartite if and only it is triangle free and $K_{r,r}$ is the only such graph.
\end{theorem}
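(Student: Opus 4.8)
The plan is to handle both implications and the uniqueness claim in one stroke, by showing that the triangle-free hypothesis together with the degree bound forces $G$ to be exactly $K_{r,r}$. One direction of the biconditional is free of content: a bipartite graph has no odd cycle and hence no triangle, so \emph{bipartite} always implies \emph{triangle free}, independently of the degree condition. All the work lies in the converse, where I would in fact prove the sharper statement that a triangle-free graph $G$ with $\delta(G)\ge\frac{n}{2}$ must be the balanced complete bipartite graph $K_{r,r}$ with $r=\frac{n}{2}$; this yields bipartiteness and the uniqueness assertion simultaneously.

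The engine of the proof is a single local observation. Let $N(v)$ denote the neighbourhood of a vertex $v$. Since $G$ is triangle free, any two adjacent vertices $u,v$ can have no common neighbour, so $N(u)$ and $N(v)$ are disjoint subsets of $V(G)$ and $d_G(u)+d_G(v)=|N(u)|+|N(v)|\le n$. The minimum-degree hypothesis gives $d_G(u)+d_G(v)\ge n$, and comparing the two bounds forces $d_G(u)=d_G(v)=\frac{n}{2}$ together with $N(u)\cup N(v)=V(G)$ as a disjoint union, for \emph{every} edge $uv$. In particular $n$ is even, $G$ is $\frac{n}{2}$-regular, and we may set $r=\frac{n}{2}$. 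I note here that the threshold $\frac{n}{2}$ is sharp and is used at full strength: since $\delta(K_{r,r})=\frac{n}{2}$, the hypothesis must be read as $\delta(G)\ge\frac{n}{2}$ for $K_{r,r}$ itself to qualify, and it is exactly this equality case that rigidifies the structure.

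Next I would promote this edge-local partition to a global bipartition. Fix any vertex $v$, put $A=N(v)$ and $B=V(G)\setminus A$, so that $|A|=|B|=r$ and $v\in B$. The set $A$ is independent because it is the neighbourhood of $v$ in a triangle-free graph. For each $u\in A$ the edge $uv$ gives $N(u)=V(G)\setminus N(v)=B$, which shows that every vertex of $A$ is joined to every vertex of $B$; this is the complete bipartite skeleton between $A$ and $B$. It remains to show $B$ is independent: if $w,w'\in B$ were adjacent, then $A\subseteq N(w)\cap N(w')$ (both are joined to all of $A$), contradicting the disjointness $N(w)\cap N(w')=\emptyset$ obtained in the previous step, since $A\ne\emptyset$. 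Hence both parts are independent and every edge runs between them, so $G=K_{r,r}$.

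The step I expect to require the most care is the passage in the previous paragraph from the purely local statement $N(u)\cup N(v)=V(G)$ to the claim that the single partition $(A,B)$ coming from one chosen vertex $v$ already controls all adjacencies of the graph; the identity $N(u)=B$ for every $u\in A$ is what makes this work, as it leaves no room for stray edges. Once $G$ is identified as $K_{r,r}$, bipartiteness is immediate, and the uniqueness claim follows at no extra cost: the whole derivation used only triangle-freeness and the degree bound, so \emph{any} qualifying triangle-free graph is this one graph, and in particular so is any qualifying bipartite graph. This closes the biconditional and pins down $K_{r,r}$ as the unique example.
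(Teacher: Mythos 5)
Your proof is correct, and it takes a genuinely different route from the paper's. The paper argues through its Lemma 4.1: the degree hypothesis forces $diam(G)\leq 2$, from which it claims the largest holes are of type $C_4$, so triangle-freeness excludes all odd cycles and bipartiteness follows; the identification with $K_{r,r}$ is then asserted separately (``$K_{r,r}$ is the only $r$-regular bipartite graph on $2r$ vertices''), without explicitly deriving regularity. You bypass the diameter lemma entirely: the edge-local computation --- triangle-freeness gives $N(u)\cap N(v)=\emptyset$, the degree bound gives $d_G(u)+d_G(v)\geq n$, hence $N(u)\,\dot\cup\, N(v)=V(G)$ for every edge $uv$ --- lets you reconstruct the complete bipartition around a single vertex, delivering bipartiteness, $\frac{n}{2}$-regularity, and uniqueness in one pass. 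Your route buys two concrete things. First, rigor: the paper's step ``$diam(G)\leq 2$ implies the largest holes are $C_4$'' is false as a general implication (the Petersen graph has diameter $2$ and induced $5$-cycles), so the paper's necessary-condition argument has a gap that your argument simply never encounters; the degree hypothesis must be used more directly than through the diameter, and that is exactly what you do. Second, you flag and resolve the strictness issue: under the literal hypothesis $\delta(G)>\frac{n}{2}$ no triangle-free (indeed no bipartite) graph exists at all, since triangle-freeness forces $d_G(u)+d_G(v)\leq n$ across every edge, so the theorem is only non-vacuous when read with $\delta(G)\geq\frac{n}{2}$; the paper never addresses this. What the paper's approach buys in exchange is brevity and reuse of a lemma it had already proved.
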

\begin{proof}
Necessary condition: It follows from Lemma 4.1 that $diam(G) \leq 2.$ Therefore the largest holes $G$ can have are of the kind, $C_4$. If $G$ is triangle free it has no odd cycle hence, $G$ is bipartite.\\
Sufficient condition: If $G$ is bipartite it contains no odd cycle hence, $G$ is triangle free.\\
Finally $K_{r,r}$ is the only $r$-regular bipartite graph on exactly (at most) $2r$ vertices.
\end{proof}
We state the next lemma without proof.
\begin{lemma}
Let graphs $G$ and $H$ have minimum proper coloring sets $\mathcal{C}_G =\{c_1,c_2,c_3,\dots,c_{\chi(G)}\}$ and $\mathcal{C}_H =\{c_1,c_2,c_3,\dots,c_{\chi(H)}\}$, respectively. Assume $\chi(G)\leq \chi(H)$. Let $v_i\rightarrow c_k$, $v_i\in V(G)$ and $u_j\rightarrow c_t$, $u_j\in V(H)$ and both $c_k,c_t \in \mathcal{C}_G$ hence, $c_t \in \mathcal{C}_H,$ as well.\\
(i) If $G$ and $H$ are joined by merging vertices $v_i,u_j$ as a common vertex which is now assigned the color $c_k$; an equivalent minimum proper coloring of $H$ is possible by re-assigning color $c_k$ to all vertices in $H$ which were assigned $c_t$; and assigning color $c_t$ to all vertices in $H$ which were assigned color $c_k.$\\
(ii) If graphs $G$ and $H$ are allowed to join by merging an edge each into a common edge, result (i) holds by applying it consecutively to the pairs of vertices $(v_i,u_j)$ and $(v_\ell, u_m)$, $v_iv_\ell \in E(G)$ and $u_ju_m \in E(H).$
\end{lemma}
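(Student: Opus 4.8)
The plan is to rest the entire argument on one elementary fact: if $\sigma$ is any permutation of the color set, then recoloring each vertex $w$ from its color $c$ to $\sigma(c)$ carries a proper coloring to a proper coloring and leaves the number of colors actually used unchanged (the set of used colors is merely permuted). In particular the transposition interchanging two colors of $\mathcal{C}_H$ sends a minimum proper coloring of $H$ to another minimum proper coloring of $H$ --- this is precisely the \emph{equivalent} coloring named in the statement. Since $c_k \in \mathcal{C}_G \subseteq \mathcal{C}_H$ and $c_t \in \mathcal{C}_H$, every color I permute genuinely belongs to the palette of $H$, so each such recoloring is legitimate.

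For (i) I would apply to $H$ the transposition $\tau$ that interchanges $c_k$ and $c_t$ and fixes all other colors. By the fact above this produces a minimum proper coloring of $H$ in which $u_j$, formerly colored $c_t$, now carries $c_k$, matching $v_i$. It then remains to verify that assigning $c_k$ to the merged vertex is proper once $v_i$ and $u_j$ are identified. Its neighbors are the $G$-neighbors of $v_i$ together with the $H$-neighbors of $u_j$. None of the former carries $c_k$, as $G$ is properly colored with $v_i \to c_k$. For the latter, a vertex carries $c_k$ after $\tau$ exactly when it carried $c_t$ before $\tau$; but no neighbor of $u_j$ carried $c_t$, since $u_j \to c_t$ under a proper coloring of $H$. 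Hence no neighbor of the merged vertex receives $c_k$, and the combined coloring is proper.

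For (ii) the join identifies $v_i$ with $u_j$ and $v_\ell$ with $u_m$ along a common edge. Write $v_i \to c_k$ and $v_\ell \to c_{k'}$ in $G$; since $v_iv_\ell \in E(G)$ is properly colored, $c_k \neq c_{k'}$. I would first carry out the argument of (i) on $(v_i,u_j)$, fixing $u_j$ at $c_k$, and then repeat it on $(v_\ell,u_m)$: letting $c_s$ denote the current color of $u_m$, apply the transposition of $c_s$ and $c_{k'}$ to drive $u_m$ to $c_{k'}$. Because every color moved lies in $\mathcal{C}_H$, each step again preserves both properness and minimality of the coloring of $H$; at the end both merged vertices hold the colors prescribed by $G$, the shared edge is properly colored because $c_k \neq c_{k'}$, and properness at each merged vertex follows exactly as in (i).

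The point needing care --- and the main obstacle --- is that the second transposition must not undo the first: it must fix $c_k$ so that $u_j$ stays colored $c_k$. This holds because the two colors it moves are both different from $c_k$. Indeed $c_{k'} \neq c_k$ from the properly colored edge $v_iv_\ell$, and $c_s \neq c_k$ because after the first step $u_j \to c_k$ and $u_m \to c_s$ are adjacent in $H$ under a proper coloring. Once this disjointness is recorded, the consecutive application of (i) goes through and yields the claim.
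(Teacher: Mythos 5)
The paper offers no proof to compare against: Lemma 4.3 is explicitly ``stated without proof,'' so your argument is filling a gap the authors left open rather than paralleling anything in the text. On its own merits your proof is correct. The reduction to one elementary fact --- that any permutation of the palette carries a proper coloring to a proper coloring using the same number of colors --- is exactly the right formalization of the lemma's ``re-assigning'' language, and your verification of properness at the merged vertex in (i) covers both sides of the identification (no $G$-neighbor of $v_i$ carries $c_k$ by properness in $G$; no $H$-neighbor of $u_j$ carries $c_k$ after the transposition because none carried $c_t$ before it). Most importantly, you isolate and settle the one point where a naive ``apply (i) twice'' reading of part (ii) could fail: the second transposition $(c_s\,c_{k'})$ must fix $c_k$, which you justify from $c_{k'}\neq c_k$ (the edge $v_iv_\ell$ is properly colored in $G$) and $c_s\neq c_k$ (the edge $u_ju_m$ is properly colored in $H$ after the first swap). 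That observation is precisely what makes the paper's one-line claim in (ii) true, and the authors never record it; had the pairs not been joined by edges in both graphs, the consecutive application could indeed clash. Your proof is a sound, self-contained replacement for the omitted one.
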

Now we introduce a family of \emph{pattern structured} graphs. When a cluster of two copies of a graph $H$ are allowed to merge at least one edge (not necessarily structurally equivalent edges) to share at least one common edge, the new graph is called a $H$-\emph{gridlike cluster}. Two or more $H$-\emph{gridlike clusters} are allowed to merge similarly to get an expanded $H$-\emph{gridlike cluster}. When a cluster of two or more copies of a graph $H$ are all allowed to merge a vertex (not necessarily structurally equivalent vertices) to share a common vertex, the new graph is called a $H$-\emph{cloverlike cluster}. When a cluster of two or more copies of a graph $H$ are all allowed to merge an edge (not necessarily structurally equivalent edges) to all share a common edge, the new graph is called a $H$-\emph{booklike cluster}. When two copies $H_1, H_2$ are joined by a path $ve_0w_1e_1w_2e_2w_3\dots e_{m-1}w_me_mu$, $v \in V(H_1),u \in V(H_2)$ we say they are adjacent. If a graph $G^*$ is the composition of the aforesaid then; a vertex, a $H$-gridlike cluster, a $H$-cloverlike cluster, a $H$-booklike cluster are called $H$-elements of $G^*$. If $G^*$ has a cycle between between at least two $H$-elements $G^*$ is called cyclic, else it is called acyclic or $H$-\emph{treelike}.
\begin{theorem}
A $H$-treelike graph $G^*$ with $\chi(H) \geq 2$ has, $\chi(G^*) = \chi(H).$
\end{theorem}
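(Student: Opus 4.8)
The plan is to establish the two inequalities separately. The lower bound $\chi(G^*) \geq \chi(H)$ is immediate, since $G^*$ is built from copies of $H$ and therefore contains $H$ as a subgraph. The whole content lies in the upper bound $\chi(G^*) \leq \chi(H)$, which I would prove by exhibiting a proper coloring of $G^*$ drawn from the fixed palette $\{c_1, \ldots, c_{\chi(H)}\}$.

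I would induct on the number of $H$-elements. For the base case, a single $H$-element is either a single vertex or one of the three cluster types, each of which is assembled by repeatedly merging copies of $H$ at a common vertex (cloverlike), along a common edge (booklike), or at shared edges (gridlike). Building such a cluster one copy at a time and applying Lemma 4.3 at each merge — part (i) for a vertex-merge and part (ii) for an edge-merge — keeps the partially built cluster properly colored from $\{c_1, \ldots, c_{\chi(H)}\}$: the hypothesis $\chi(G) \leq \chi(H)$ of that lemma holds with equality at every step, because both the partial cluster and each freshly attached copy of $H$ have chromatic number $\chi(H)$. Hence every single $H$-element is $\chi(H)$-colorable, and this iterated application of Lemma 4.3 is the main substantive part of the argument.

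For the inductive step I would view the $H$-elements as the nodes of a tree and the connecting paths as its edges; this tree structure is exactly the acyclicity in the definition of $H$-treelike. Choose a leaf element $E$, joined to the remainder $G'$ by one path $v\, e_0\, w_1 \cdots w_m\, e_m\, u$ with $v \in V(G')$ and $u \in V(E)$. Deleting $E$ together with the internal path vertices $w_1, \ldots, w_m$ leaves an $H$-treelike graph $G'$ with one fewer element, which by induction I color from $\{c_1, \ldots, c_{\chi(H)}\}$, fixing $c(v)$; by the base case I color $E$ from the same palette. It remains to stitch the two together along the path, and this is where the only hypothesis, $\chi(H) \geq 2$, is used.

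The stitching is the step to watch, though it turns out to be mild. I would color the internal vertices $w_1, \ldots, w_m$ by simple alternation starting from the $v$-end — each $w_i$ chosen different from its predecessor, which two colors already permit — and then use the freedom granted by Lemma 4.3 to permute the colors on $E$ so that $c(u)$ differs from $c(w_m)$ (or from $c(v)$ directly when $m = 0$). Since $\chi(H) \geq 2$ there are at least two colors available, so such a choice for $c(u)$ always exists; this is precisely why the hypothesis cannot be relaxed to $\chi(H) = 1$, where the connecting edges could not be colored at all. The parity pitfall of trying to fix both endpoint colors in advance is sidestepped by leaving $c(u)$ free until the path is colored. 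Reassembling, every $H$-element and every connecting path carries a proper coloring from $\{c_1, \ldots, c_{\chi(H)}\}$, so $\chi(G^*) \leq \chi(H)$, and together with the lower bound we conclude $\chi(G^*) = \chi(H)$.
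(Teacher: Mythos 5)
Your proposal is correct and takes essentially the same route as the paper's own proof: both arguments rest on Lemma 4.3 to recolor each copy of $H$ at its merge points (part (i) for vertex-merges and path-ends, part (ii) for edge-merges), with the hypothesis $\chi(H) \geq 2$ entering only to color the connecting paths between $H$-elements. Your write-up simply makes explicit the scaffolding the paper leaves implicit --- the lower bound $\chi(G^*) \geq \chi(H)$ via the subgraph containment, and the induction on the tree of $H$-elements in place of the paper's ``invoking Cases (i) to (iv) throughout'' --- which is an organizational refinement rather than a different argument.
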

\begin{proof}
Consider any $H$-treelike graph $G^*$.\\
Case (i): Consider a $H$-gridlike element and without loss of generality consider any pair $H_1,H_2$ sharing a common edge say $uv$. If in a minimum proper coloring of $H_1$ the vertex coloring is $v\rightarrow c_i, u\rightarrow c_j$ then after applying Lemma 4.3 the $\chi$-number of the partial $H$-gridlike element remains the same.\\
Case (ii): Consider a $H$-cloverlike element and without loss of generality assume the common vertex is $v$. If in a minimum proper coloring of $H_1$ the vertex coloring is $v\rightarrow c_i$ then after applying Lemma 4.3 (i) the $\chi$-number in the partial $H$-cloverlike element remains the same. By iteratively applying Lemma 4.3 (i) to all pairs, the $\chi$-number of the whole $H$-cloverlike element remains the same.\\
Case (iii): Consider a $H$-booklike element. Similar reasoning as in Case (i) follows.\\
Case (iv): First consider adjacent $H$-elements $H_1$ and $H_2$ as the vertex join between $H_1$ and an end vertex of a path and apply Lemma 4.3 (i). Thereafter, consider the vertex join between the other end vertex of the path and $H_2$ and apply Lemma 4.3 (i) again. Clearly the $\chi$-number remains the same.\\\\
Invoking Cases (i) to (iv) throughout a $H$-treelike graph $G^*$ settles the result in general. 
\end{proof}
\section{Conclusion}
We recall that an \emph{almost regular} graph $G$ is such that $\Delta(G) - \delta(G) = 1.$ A \emph{partial extension} of an almost regular graph $G$ is defined to be the graph $G^{px}$ obtained by adding edges to $G$ such that $G^{px}$ is $\Delta(G)$-regular. A path is such an almost regular graph so it is easy to see that $P^{px}_n = C_n$, therefore $\chi'(P^{px}_n) = \chi'(C_n)$ and $\chi^+(P^{px}_n) = \chi^+(C_n)$. There is scope to research $\chi'(G^{px}_n)$ and $\chi^+(G^{px}_n)$ for other classes of almost regular graphs.\\\\
Kouider and El Sahili [7] showed that for a r-regular graph $G$ on at least $r^4$ vertices the $b$-chromatic number is $\varphi(G) = r+1.$ In a paper by Cabello and Jakovac [3] they improved the result by bounding the result to graphs having at least $2r^3$ vertices. So it is easy to see that for $C_n$, $n \geq 54$ we have $\varphi(C_n) =3$ and $\varphi(C^x_{n,c}) = 4.$ In general it will be worthy to research the relationship, if any, between $\varphi'(C_n)$ and $\varphi'(C^x_{n,c})$ as well as that between $\varphi^+(C_n)$ and $\varphi^+(C^x_{n,c})$ and perhaps for other classes of regular graphs.
\section{Acknowledgement}
The authors express their sincere gratitude to Professor Anirban Banerjee, Department of Mathematics and Statistics, Indian Institute of Science Education, Kolkata, India, for his constructive and insightfull comments. The authors also wish Professor Banerjee the complete return of great health. We need his amazing intellectual capacity to guide many of us for many years to come.\\\\
\textbf{\emph{Open access:}} This paper is distributed under the terms of the Creative Commons Attribution License which permits any use, distribution and reproduction in any medium, provided the original author(s) and the source are credited. \\ \\
References (Limited) \\ \\
$[1]$  A. Banerjee and S. Bej, \emph{On Extension of Regular Graphs,} arXiv:1509.05476v1, [math.CO], 17 September 2015. \\
$[2]$  J.A. Bondy and U.S.R. Murty, \textbf {Graph Theory with Applications,} Macmillan Press, London, (1976). \\
$[3]$  S. Cabello and M. Jacovac, \emph{On the b-chromatic number of regular graphs,} Discrete Applied Mathematics, \textbf{159}, (2011), pp 1303-1310. \\
$[4]$ G. Chartrand and L. Lesniak, \textbf{Graphs and Digraphs}, CRC Press, 2000.\\
$[5]$ J.T. Gross and J. Yellen, \textbf{Graph Theory and its Applications}, CRC Press, 2006.\\
$[6]$ J.E. Hopcroft and R.M. Karp, \emph{An $n^{\frac{5}{2}}$ Algorithm for Maximum Matchings in Bipartite Graphs}, SIAM Journal on Computing, 1973, \textbf{2}(4), pp 225-231.\\
$[7]$ J. Kok and N.K. Sudev, \emph{General Coloring Sums of Certain Graphs}, preprint.\\
$[8]$ M. Kouider and A. El Sahili, \emph{About b-coloring of regular graphs}, Rapport de Recherche, No 1432, CNRS-Universite Paris Sud-LRI.\\
$[9]$ P.C. Lisna and M.S. Sunitha, \emph{b-Chromatic Sum of a Graph}, Discrete Mathematics, Algorithms and Applications, \textbf{7}(3), (2015), pp 1550040(15).\\
\end{document}